\newtheorem{theorem}{Theorem}[section]
\newtheorem{corollary}{Corollary}[section]
\newtheorem{definition}{Definition}[section]
\newtheorem{example}{Example}[section]
\newtheorem{lemma}{Lemma}[section]
\newtheorem{remark}{Remark}[section]
\newenvironment{proof}[1][Proof]{\noindent\textbf{#1.} }{\hspace{\stretch{1}}$\square$}
\numberwithin{equation}{section}
\begin{document}
\begin{frontmatter}

\title{\huge Constructing 2-uninorms on bounded lattices by using additive generators\tnoteref{mytitlenote}}
\tnotetext[mytitlenote]{This work is supported by the National Natural Science Foundation of China (No. 12071325).}

\author{Shudi Liang\footnote{\emph{E-mail address}: shudi$\_$liang@163.com}, Xue-ping Wang\footnote{Corresponding author. xpwang1@hotmail.com; fax: +86-28-84761393}\\
\emph{School of Mathematical Sciences, Sichuan Normal University,}\\
\emph{Chengdu 610066, Sichuan, People's Republic of China}}

\begin{abstract}
In this article, we present two methods to construct 2-uninorms on bounded lattices by using additive generators, which are further used for inducing uninorms, nullnorms, uni-nullnorms and null-uninorms, respectively. We also provide some examples for illustrating the constructing methods of 2-uninorms.
\end{abstract}

\begin{keyword}
Bounded lattice; Additive generator; Uninorm; 2-uninorm
\end{keyword}
\end{frontmatter}

\section{Introduction}
Triangular norms (t-norms for short) and triangular conorms (t-conorms for short) are associative, commutative and monotone binary
operations with the neutral element 1 and 0, respectively.
Schweizer and Sklar \cite{SS1958} studied t-norms and t-conorms on $[0,1]$
based on Menger's notion \cite{M1942}.
T-norms and t-conorms have been proved to be useful in many fields
like fuzzy set theory \cite{Klement}, fuzzy logic \cite{BJ2008}, fuzzy systems
modeling \cite{Y2001}, and probabilistic metric spaces \cite{SS1961,SS1983}.

As important generalizations of t-norms and t-conorms,
the concepts of uninorms and nullnorms were introduced on the unit interval $[0, 1]$ by Yager and Rybalov \cite{YR1996} and Calvo et al. \cite{CB2001}, respectively. Uninorms allow for a neutral element anywhere in the unit interval, whereas nullnorms allow for a zero element $k$ anywhere in the unit interval, while keeping 0 as neutral element on $[0, k]$ and 1 as neutral element on $[k, 1]$. A series of works have been done for uninorms \cite{C2020u,DH2020,J2021,KEM2017} and nullnorms \cite{C2020n,C2020,SL2020r,WLC2022,ZWOB2022}, respectively.

To unify uninorms and nullnorms, 2-uninorms were first investigated by Akella \cite{A2007}. Since then, some properties of 2-uninorms on the unit interval are studied. For example, Dryga\'{s} and Rak \cite{DR2016} solved the functional equations of distributivity between 2-uninorms.
Wang and Qin \cite{WQ2017} studied the distributivity equations for 2-uninorms over semi-uninorms. Zong et al. \cite{ZSLB2018} described the structures of 2-uninorms. Then Sun and Liu \cite{SL2020} investigated the left (resp. right) distributivity of semi-t-operators over 2-uninorms. Almost at the same time, Zhang and Qin \cite{ZQ2020} obtained some sufficient and necessary conditions of the distributivity equations between five classes of basic 2-uninorms and overlap (resp. grouping) functions. Wang et al. \cite{WZS2021} introduced the discrete 2-uninorms. Huang and Qin \cite{HQ2021} made a deep study on the migrativity of uninorms over 2-uninorms.

As a bounded lattice is more general than $[0,1]$, the study of 2-uninorms on the unit interval has already been extended to bounded lattices. For instance, Ertu\v{g}rul \cite{E2017} and Xie and Yi \cite{XY2022} gave the constructions of 2-uninorms. Recently, Sun and Liu \cite{SL2021} explored the additive generators of t-norms and t-conorms. Also, He and Wang \cite{HW2021}
studied the additive generators of uninorms, and they even extended the classical additive generators to partially ordered cases by adding some conditions. As 2-uninorms are generalizations of uninorms,
this leads to a natural question: Could we construct 2-uninorms on bounded lattices by using additive generators? This article will focus on this question.

The remainder of this article is organized as follows. In Section \ref{sec2}, we provide the necessary background material. Section \ref{sec3} is devoted to the constructions of 2-uninorms on bounded lattices based on additive generators. Finally, conclusions are drawn in Section \ref{sec6}.

\section{Preliminaries}\label{sec2}

This section presents some basic definitions and results that are used latter.

A {\it lattice} \cite{GB1997} is a nonempty set $L$ equipped with a partial order $\leq$ such that
any two elements $x$ and $y$ have a greatest lower bound (called meet or infimum), denoted by
$x \wedge y$, as well as a smallest upper bound (called join or supremum), denoted by $x \vee y$.
A lattice is called {\it bounded} if it has a top element $1_L$ and a bottom element $0_L$.
For short, we use the notation $L$ instead of a bounded lattice $(L, \leq, 0_L, 1_L)$ throughout this article. Let $x,y\in L$. The elements $x$ and $y$ are called {\it comparable}
if $x \leq y$ or $y \leq x$. Otherwise, $x$ and $y$ are called {\it incomparable}. The symbol $x \parallel y$ is used when $x$ and $y$ are incomparable.
If $x$ and $y$ are comparable, then we use the symbol $x\nparallel y$.
In what follows, $I_a$ denotes the set of all incomparable elements with $a\in L$, i.e., $I_a=\{x\in L: x\| a\}$.
Let $a,b\in L$ with $a \leq b$. An interval $[a,b]$
is defined as $[a,b] = \{x \in L \mid a \leq x \leq b\}$, other intervals can be defined similarly,
$(a,b] =\{x \in L \mid a < x \leq b\}$, $[a,b) =\{x \in L \mid a \leq x < b \}$, $(a,b) =\{x \in L \mid a < x < b \}$.

\begin{definition}[\cite{BM1999,MW1991}]\label{def1}
\emph{
\begin{enumerate}[{\rm (i)}]
\item A binary operation $T: L^2 \to L$ is called a {\it t-norm} if it is commutative, associative, and increasing with respect to both variables and it satisfies $T(x,1_L)=x$ for all $x\in L$.
\item A binary operation $S: L^2 \to L$ is called a {\it t-conorm} if it is commutative, associative, and increasing with respect to both variables and it satisfies $S(x,0_L)=x$ for all $x\in L$.
\end{enumerate}
}
\end{definition}

\begin{definition}[\protect\cite{KM2015,YR1996}]\label{def2.2}
\emph{A binary operation $U: L^2 \to L$ is called a {\it uninorm} if it has commutativity, associativity, and increasing with respect to both variables and a neutral element $e\in L$.
}
\end{definition}

Obviously, a t-norm (resp. t-conorm) on $L$ is exactly a
uninorm with the neutral element $e=1_L$ (resp. $e=0_L$).

\begin{definition}[\protect\cite{CB2001,KIM2015}]\label{def2.3}
\emph{A binary operation $%
V:L^{2}\rightarrow L$ is called a {\it nullnorm} if it is
commutative, associative, increasing with respect to both variables, and
there exists an element $a\in L$, which is called a {\it{zero element}}
for $V$, such that $V(x,0_L)=x$ for all $x\in [0_L,a]$ and $V(x,1_L)=x$ for
all $x\in [a,1_L]$.}
\end{definition}

Obviously, a t-norm (resp. t-conorm) on $L$ is exactly a
nullnorm with the zero element $a=0_L$ (resp. $a=1_L$).

\begin{definition}[\cite{SL2021}]
\emph{Let $P,Q$ be two partially ordered sets and $f:P \to Q$ be a non-constant injective monotone function. If
$f$ is increasing, then a pseudo-inverse $f^{(-1)}: Q\to P$ is given by \eqref{def-1} when it exists.
\begin{equation}\label{def-1}
f^{(-1)}(y)=
\begin{cases}
\inf\{x\in P|f(x)>y\}, & \mbox{ if card}\{f^{-1}(y)\}=0, \\
f^{-1}(y), & \mbox{ if card}\{f^{-1}(y)\}=1.\\
\end{cases}
\end{equation}
If $f$ is decreasing then a pseudo-inverse $f^{(-1)} : Q\to P$ is given by \eqref{def-2} when it exists.
\begin{equation}\label{def-2}
f^{(-1)}(y)=
\begin{cases}
\sup\{x\in P|f(x)>y\}, & \mbox{ if card}\{f^{-1}(y)\}=0, \\
f^{-1}(y), & \mbox{ if card}\{f^{-1}(y)\}=1.\\
\end{cases}
\end{equation}
}
\end{definition}

\begin{definition}[\cite{HW2021}]
\emph{Let $0\in A\subseteq [-\infty, +\infty]$. For two non-zero elements $x,a\in A$, if there exists $b\in A$ fulfilling $x=a+b$ and
$ab>0$, then we call $a$ a {\it summand} of $x$ in $A$.}
\end{definition}

\begin{remark}[\cite{HW2021}]\label{ex-1}
\emph{Let $0\in A\subseteq [-\infty, +\infty]$ and $x\in A$. If $x=0$ then $x$ has no summands. If $x\neq 0$, then $x$ is always a summand of itself
and each summand $a$ of $x$ satisfies $xa>0$}
\end{remark}
\begin{definition}[\protect\cite{SWQ2017,WZL2020}]\label{def2.6}
\emph{ A binary operation $F : L^2\to L$ is called a {\it uni-nullnorm} if it satisfies the commutativity, associativity, monotonicity with respect to both variables, and there exist some elements $e, a\in L$ with $0_L\leq e < a\leq 1_L$ such that
$F(e, x) = x$ for all $x\in [0_L, a]$ and $F(x, 1_L) = x$ for all $x\in [a, 1_L]$.}
\end{definition}
\begin{definition}[\protect\cite{A2007,E2017}]\label{def2.7}
\emph{Let $e_1, a, e_2\in L$ with $0_L\leq e_1\leq a\leq e_2\leq 1_L$. A binary operation $\mathbb{U}: L^2\to L$ is called a {\it 2-uninorm} if it is commutative, associative,
increasing with respect to both variables and fulfilling $\mathbb{U}(e_1,x)=x$ for all $x\leq a$ and $\mathbb{U}(e_2,x)=x$ for all $x\geq a$.} \end{definition}

From Definitions \ref{def2.2}, \ref{def2.3}, \ref{def2.6} and \ref{def2.7}, one can easily check the following remark.
\begin{remark}\label{remar2.2}
\emph{
\begin{enumerate}[{\rm (i)}]
\item
A 2-uninorm with $e_2=1_L$ is a uni-nullnorm.
\item
A 2-uninorm with $e_1=0_L$ is a null-uninorm.
\item
A 2-uninorm with $a=1_L$ or $a=0_L$ is a uninorm.
\item
A 2-uninorm with $e_1=0_L$ and $e_2=1_L$ is a nullnorm.
\end{enumerate}
}
\end{remark}

\section{Constructions of 2-uninorms on bounded lattices}\label{sec3}
In this section, we first give two new methods for constructing 2-uninorms on bounded lattices $L$, which are further used for inducing uninorms, nullnorms, uni-nullnorms and null-uninorms, respectively. We then provide some examples to illustrate the new methods.

Let $a\in L$. Denoted by $\mathcal{F}_a=\{f(x)|0< f(x)< f(a) \mbox{ and } x\in I_a\}$ when $f:L\to [-\infty,+\infty]$ is injective increasing, and by $\mathcal{G}_a=\{f(x)|f(a)< f(x)< 0 \mbox{ and } x\in I_a\}$ when $f:L\to [-\infty,+\infty]$ is injective decreasing.

\begin{lemma}\label{lem-1}
Let $f:L\to [-\infty,+\infty]$ be an injective increasing function, and $x,y\in L$ with $x\leq y$. If $f(x)\leq f(y)\leq f(a)$ and $x\in I_a$, then $y\in I_a$.
\end{lemma}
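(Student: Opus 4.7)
The natural approach is proof by contradiction on the assumption that $y \notin I_a$, so that $y$ is comparable with $a$: either $y \leq a$ or $y \geq a$. The two cases are handled separately and both produce a contradiction with $x \in I_a$.

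\textbf{Case $y \leq a$.} Here I would simply chain the given inequality $x \leq y$ with $y \leq a$ to conclude $x \leq a$ by the transitivity of $\leq$. This immediately contradicts the hypothesis $x \parallel a$, so this case cannot occur.

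\textbf{Case $y \geq a$.} This is the only case that really uses the function $f$. Since $f$ is increasing, $y \geq a$ forces $f(y) \geq f(a)$. Combining this with the assumed $f(y) \leq f(a)$ yields $f(y) = f(a)$, and then the injectivity of $f$ gives $y = a$. But then $x \leq y = a$, again contradicting $x \in I_a$.

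Since both cases lead to a contradiction, $y$ must be incomparable with $a$, i.e., $y \in I_a$. I do not anticipate any obstacle here: the argument is a short case split whose only non-trivial ingredient is the observation that an injective increasing map into the totally ordered set $[-\infty,+\infty]$ still reflects equality (via injectivity) even though it need not reflect the order in general. The hypothesis $f(x) \leq f(y)$ is actually redundant, being a direct consequence of $x \leq y$ and $f$ increasing; only $f(y) \leq f(a)$ plays an active role.
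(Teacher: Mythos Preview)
Your proof is correct and follows essentially the same route as the paper: both argue by contradiction, assuming $y\notin I_a$ and reaching $x\leq a$. The paper compresses your two cases into one step, asserting directly that $f(y)\leq f(a)$ together with $f$ injective increasing forces $y\leq a$ (implicitly because $y>a$ would give $f(y)>f(a)$); your explicit split into $y\leq a$ and $y\geq a$ just unpacks that same reasoning, and your observation that the hypothesis $f(x)\leq f(y)$ is redundant is accurate.
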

\begin{proof}
Assume that $y\notin I_a$. Then $y\leq a$ since $f(y)\leq f(a)$ and $f$ is injective increasing. Thus from $x\leq y$ we obtain $x\leq y\leq a$,  contrary to the fact that $x\in I_a$.
\end{proof}

\begin{theorem}\label{thm-1}
Let $e_1, a, e_2\in L$ with $0_L\leq e_1\leq a\leq e_2\leq 1_L$, and $f:L\to [-\infty,+\infty]$ be an injective increasing function with $f(e_1)=0$.
If $f$ satisfies the following five conditions: for all $x,y\in L$,
\begin{enumerate}[{\rm (i)}]
  \item
if $(f(x),f(y))\in [0,f(a)]^2\cup [-\infty, 0]^2$ then
$$\min\{f(x)+f(y),f(a)\}\in Ran(f)\cup [-\infty,f(0_L)),$$
  \item
 for all $(f(x),f(y))\in [0,f(a)]^2\cup [-\infty, 0]^2$, if $f(x)$ and $f(y)$ have at least one same summand $f(z)\in Ran(f)$ then $x\nparallel y$,
\item
if $f(x)<0< f(y)\leq f(a)$ and $x<e_1$, then $x\nparallel y$,
  \item
if $f(a)\leq f(x)< f(y)$ or $f(x)<0<f(a)\leq f(y)$, then $x\nparallel y$,
  \item
for all $f(x),f(y)\in (0,f(a))$, if $f(x)+f(y)\in Ran(f)$ and $0< f(x)+f(y)\leq f(a)$ then $f^{-1}(f(x)+f(y))\nparallel a$,
\end{enumerate}
then the following function $\mathbb{U}: L^2\to L$ is a 2-uninorm, and we call $f$ an additive generator of $\mathbb{U}$. For all $x,y\in L$,
\begin{equation}\label{eq-1}
\mathbb{U}(x, y)=
\begin{cases}
f^{(-1)}(f(x)+f(y)), & (f(x),f(y))\in [-\infty, 0]^2, \\
f^{(-1)}(\min\{f(x)+f(y), f(a)\}), & (f(x),f(y))\in [0,f(a)]^2 \mbox{ and } x\notin I_a \mbox{ and } y\notin I_a, \\
a, & (f(x),f(y))\in (f(a),+\infty]\times [0,f(a)]\\
&~~~~\cup [0,f(a)]\times (f(a),+\infty]\cup [0,f(a)]\times \mathcal{F}_a\\
&~~~~\cup \mathcal{F}_a\times [0,f(a)],\\
f^{(-1)}(\max\{f(x),f(y)\}), & (f(x),f(y))\in (f(e_2),+\infty]^2\cup (f(e_2),+\infty]\\
&~~~~\times (f(a),f(e_2)]\cup (f(a),f(e_2)]\times (f(e_2),+\infty],\\
f^{(-1)}(\min\{f(x),f(y)\}), & otherwise.
\end{cases}
\end{equation}
\end{theorem}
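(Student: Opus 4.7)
The plan is to verify in turn each of the four defining properties of a 2-uninorm from Definition \ref{def2.7}: commutativity, the boundary identities $\mathbb{U}(e_1,x)=x$ on $[0_L,a]$ and $\mathbb{U}(e_2,x)=x$ on $[a,1_L]$, monotonicity, and associativity. Commutativity is immediate from \eqref{eq-1}: every branch is built from the symmetric operations $+$, $\min$ and $\max$, and whenever a Cartesian product appears in a region description its swap is listed alongside. For the neutral-element identities I would use $f(e_1)=0$: substituting $y=e_1$ reduces the first branch to $f^{(-1)}(f(x))=x$ for $x\leq e_1$ and the second branch to $f^{(-1)}(\min\{f(x),f(a)\})=x$ for $e_1\leq x\leq a$, using that $x\leq a$ excludes $x\in I_a$. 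A symmetric argument at $e_2$, using the fourth and the ``otherwise'' branches, yields $\mathbb{U}(e_2,x)=x$ for $x\geq a$; condition (iv) ensures that any $x\geq a$ is comparable with $e_2$, so no incomparable subcase arises.

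For monotonicity I would fix $y\in L$ and show $\mathbb{U}(x_1,y)\leq\mathbb{U}(x_2,y)$ whenever $x_1\leq x_2$, by a case analysis on the stratum of $f(x_1),f(x_2),f(y)$ among the intervals $[-\infty,0]$, $[0,f(a)]$, $(f(a),f(e_2)]$ and $(f(e_2),+\infty]$, refined by whether each argument lies in $I_a$. Lemma \ref{lem-1} is the key tool here: if $x_1\leq x_2$ with both $f$-values in $[0,f(a)]$ and $x_1\in I_a$, it forces $x_2\in I_a$, so the transition between the second and third branches of \eqref{eq-1} is consistent with monotonicity. All other transitions reduce to comparing $f^{(-1)}$ applied to values that are monotone in $f(x_i)$.

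The main obstacle is associativity, and this is where conditions (i)--(v) play their precise roles. I would proceed by an extensive case analysis on the stratum of $(f(x),f(y),f(z))$, showing in each case that both $\mathbb{U}(\mathbb{U}(x,y),z)$ and $\mathbb{U}(x,\mathbb{U}(y,z))$ fall into the same branch of \eqref{eq-1} with the same argument of $f^{(-1)}$. Condition (i) guarantees that the intermediate value $\min\{f(\cdot)+f(\cdot),f(a)\}$ lies in $\mathrm{Ran}(f)\cup[-\infty,f(0_L))$, so $f^{(-1)}$ returns a lattice element that fits back into \eqref{eq-1}. Condition (ii) forces the ``same summand'' preimages to be comparable, which collapses the two bracketings to a single value. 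Conditions (iii) and (iv) eliminate the pathological crossings between $I_a$ and the sub-/super-$a$ strata that would otherwise send the two sides into different branches. Condition (v) ensures that whenever $f(x)+f(y)\in\mathrm{Ran}(f)\cap(0,f(a)]$, the preimage $f^{-1}(f(x)+f(y))$ is comparable with $a$, keeping the next application of $\mathbb{U}$ in the appropriate branch. Modulo these ingredients, the residual work is bookkeeping on the numerous subcases, each collapsing via commutativity and associativity of $+$, $\min$ and $\max$.
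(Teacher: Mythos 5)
Your overall architecture matches the paper's: well-definedness is folded into condition (i), then commutativity, the two neutral-element identities, monotonicity and associativity are checked by stratifying $(f(x),f(y))$ (resp. $(f(x),f(y),f(z))$) over the intervals $[-\infty,0]$, $[0,f(a)]$, $(f(a),f(e_2)]$, $(f(e_2),+\infty]$ refined by membership in $I_a$. However, there is a genuine gap in your monotonicity step. You claim that, apart from the $I_a$-transition handled by Lemma \ref{lem-1}, ``all other transitions reduce to comparing $f^{(-1)}$ applied to values that are monotone in $f(x_i)$.'' On a bounded lattice this reduction is not automatic: for an injective increasing $f$, the inequality $f(u)\leq f(v)$ does \emph{not} imply $u\leq v$; it only rules out $u>v$, and $u\parallel v$ remains possible. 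Converting an inequality between $f$-values back into an inequality between lattice elements is exactly what conditions (ii)--(v) are for, and this is where the bulk of the monotonicity proof lives. For instance, when $f(x)\leq f(y)\leq 0$ and $f(z)<0$ with $f(x)+f(z),f(y)+f(z)\in Ran(f)$, one needs condition (ii) (common summand $f(z)$) to get $\mathbb{U}(x,z)\nparallel\mathbb{U}(y,z)$ before concluding $\mathbb{U}(x,z)\leq\mathbb{U}(y,z)$; when $f(x)<0=f(y)$ and $0<f(z)\leq f(a)$ one needs (iii) to compare $x$ with $z$; when $f(x)<0<f(y)$ one needs (iv) to compare $x$ with $a$ or with $z$; and when $f(x)+f(z)\leq f(a)\leq f(y)+f(z)$ one needs (v) to compare $f^{-1}(f(x)+f(z))$ with $a$. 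Your sketch instead assigns (ii)--(v) almost entirely to associativity, so as written the monotonicity argument would fail; the paper's own counterexamples (its generators $f_2,f_3,f_4,f_5$) all exhibit \emph{monotonicity} violations when one of (ii)--(v) is dropped.

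A smaller point: you should make explicit, before anything else, that $\mathbb{U}(x,y)$ is well defined, i.e.\ that $f^{(-1)}$ is actually applicable to every argument produced by \eqref{eq-1}. This is where condition (i) first enters (if $\min\{f(x)+f(y),f(a)\}\in Ran(f)$ the pseudo-inverse is the genuine inverse; if it lies in $[-\infty,f(0_L))$ the infimum in \eqref{def-1} is $0_L$), and the paper devotes a separate preliminary step to it. Your remark that condition (iv) is needed for $\mathbb{U}(e_2,x)=x$ on $[a,1_L]$ is also unnecessary: that identity follows directly from injectivity and the choice of branches, without (iv).
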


\begin{proof}
First it is necessary to check that $\mathbb{U}$ is well-defined. The proof is made in five cases:
\begin{enumerate}[{\rm (I)}]
  \item
If $(f(x),f(y))\in [f(a),+\infty)\times [0,f(a)]\cup [0,f(a)]\times [f(a),+\infty)\cup [0,f(a)]\times \mathcal{F}_a\cup \mathcal{F}_a\times [0,f(a)]$, then $\mathbb{U}(x,y)=a$. Thus $\mathbb{U}$ is well-defined by \eqref{eq-1}.
  \item
If $(f(x),f(y))\in [-\infty,0]^2$, then $f(x)+f(y)\leq f(a)$, subsequently,
$$f^{(-1)}(\min\{f(x)+f(y),f(a)\})=f^{(-1)}(f(x)+f(y)).$$
There are two subcases as follows.

Subcase (II-a). If $f(x)+f(y)\in Ran(f)$ then clearly card$\{f^{-1}(f(x)+f(y))\}=1$ since $f$ is injective. Thus
$\mathbb{U}(x,y)$ is well-defined by \eqref{def-1}.

Subcase (II-b). If $f(x)+f(y)\in [-\infty,f(0_L))$ then card$\{f^{-1}(f(x)+f(y))\}=0$. By \eqref{eq-1}, we know that
 $$f^{(-1)}(f(x)+f(y))=\inf\{z\in L:f(z)>f(x)+f(y)\}=\wedge L=0_L.$$
 Hence $\mathbb{U}(x,y)$ is well-defined by \eqref{def-1}.
 \item
If $(f(x),f(y))\in [0,f(a)]^2$, $x,y\notin I_a$ and $f(x)+f(y)\leq f(a)$, then $f(x)+f(y)\in Ran(f)$, subsequently,
$$f^{(-1)}(\min\{f(x)+f(y),f(a)\})=f^{(-1)}(f(x)+f(y)).$$
Thus, similar to (II-a), $\mathbb{U}(x,y)$ is well-defined.
 \item
If $(f(x),f(y))\in [0,f(a)]^2$, $x,y\notin I_a$ and $f(x)+f(y)\geq f(a)$, then
$$f^{(-1)}(\min\{f(x)+f(y),f(a)\})=f^{(-1)}(f(a))=a.$$

Thus $\mathbb{U}$ is well-defined by \eqref{eq-1}.
  \item
For the other cases, without loss of generality, we suppose that $f(x)\leq f(y)$. Then
$$f^{(-1)}(\min\{f(x),f(y)\})=f^{(-1)}(f(x))=x,$$
and
$$f^{(-1)}(\max\{f(x),f(y)\})=f^{(-1)}(f(y))=y.$$
\end{enumerate}

Thus $\mathbb{U}(x,y)$ is well-defined by \eqref{eq-1}.

Therefore, from (I)-(V), we know that $\mathbb{U}(x,y)$ is well-defined for any $x,y\in L$.

Next, we prove that $\mathbb{U}$ is a 2-uninorm. First, from \eqref{eq-1}, we have the following three statements.
\begin{enumerate}[{\rm (a)}]
  \item
$\mathbb{U}$ is a commutative binary operation on $L$.
  \item
  If $x\in [0_L,a]$, i.e., $f(x)\leq f(a)$ and $x\notin I_a$, then $\mathbb{U}(e_1,x)=f^{(-1)}(\min\{f(e_1)+ f(x), f(a)\})=f^{(-1)}((f(x))=x$ since $f$ is injective increasing.
  \item
  If $x\in [a,1_L]$, i.e., $f(x)\geq f(a)$, then $\mathbb{U}(e_2,x)=f^{(-1)}(\min\{f(e_2), f(x)\})=f^{(-1)}(f(x))=x$ for $f(a)\leq f(x)\leq f(e_2)$ and
  $\mathbb{U}(e_2,x)=f^{(-1)}(\max\{f(e_2), f(x)\})=f^{(-1)}(f(x))=x$ for $f(x)\geq f(e_2)$ since $f$ is injective increasing.
\end{enumerate}
Then it remains to show the monotonicity and associativity of the binary operation $\mathbb{U}$.
\begin{enumerate}[{\rm (1)}]
\item
\textbf{Monotonicity. }Let $x,y,z\in L$ with $x\leq y$. We need to verify $\mathbb{U}(x,z)\leq \mathbb{U}(y,z).$  Noticing that $f(x)\leq f(y)$ since $x\leq y$ and $f$ is injective increasing. Therefore, the proof is split into all possible cases as follows.

1. $f(z)=0$.

1.1 If $f(x)\leq f(y)< 0$, or $f(x)< 0\leq f(y)\leq f(a)$ and $y\notin I_a$, then

$$\mathbb{U}(x,z)=f^{(-1)}(f(x)+f(z))=x\leq y=f^{(-1)}(f(y)+f(z))=\mathbb{U}(y,z).$$

1.2 If $f(x)< 0\leq f(y)\leq f(a)$ and $y\in I_a$, or $f(x)< 0<f(a)<f(y)$, then from (iv), we have

$$\mathbb{U}(x,z)=f^{(-1)}(f(x)+f(z))=f^{(-1)}(f(x))=x\leq a=\mathbb{U}(y,z).$$

1.3 $0\leq f(x)\leq f(y)\leq f(a)$.

1.3.1 If $x,y\notin I_a$, then $\mathbb{U}(x,z)=x\leq y=\mathbb{U}(y,z)$.

1.3.2 If $x\in I_a$, then $y\in I_a$ by Lemma \ref{lem-1}, subsequently, $\mathbb{U}(x,z)=a=\mathbb{U}(y,z)$.

1.3.3 If $x\notin I_a$ and $y\in I_a$, then $\mathbb{U}(x,z)=x\leq a=\mathbb{U}(y,z)$.

1.4 If $0\leq f(x)\leq f(a)<f(y)$ and $x\notin I_a$, then $\mathbb{U}(x,z)=f^{(-1)}(f(x)+f(z))=x\leq a=\mathbb{U}(y,z)$.

1.5 If $0\leq f(x)\leq f(a)<f(y)$ and $x\in I_a$, then $\mathbb{U}(x,z)=a=\mathbb{U}(y,z)$.

1.6 If $f(a)<f(x)\leq f(y)$, then $\mathbb{U}(x,z)=a=\mathbb{U}(y,z)$.

2. $f(z)<0$.

2.1. If $f(x)\leq f(y)\leq 0$, then we have $f(x)+f(z)\leq 0< f(a)$, subsequently,

$$\mathbb{U}(x,z)=f^{(-1)}(\min\{f(x)+f(z),f(a)\})=f^{(-1)}(f(x)+f(z)).$$

Analogously, we obtain $\mathbb{U}(y,z)=f^{(-1)}(\min\{f(y)+f(z),f(a)\})=f^{(-1)}(f(y)+f(z))$.

2.1.1. If $f(x)+f(z)\in [-\infty,f(0_L))$, then $\mathbb{U}(x,z)=f^{(-1)}(f(x)+f(z))=\wedge L=0_L\leq \mathbb{U}(y,z)$.

2.1.2. If $f(x)+f(z)\in Ran(f)$, then $f(y)+f(z)\in Ran(f)$. Noticing that $f(z)$ is the same summand of both $f(x)+f(z)$ and $f(y)+f(z)$, thus $\mathbb{U}(x,z)\nparallel \mathbb{U}(y,z)$ by (ii).
Therefore, $\mathbb{U}(x,z)\leq \mathbb{U}(y,z)$ since $f(x)+f(z)\leq f(y)+f(z)$ and $f$ is injective increasing.

2.2. If $f(x)< 0< f(y)$, then $\mathbb{U}(x,z)=f^{(-1)}(f(x)+f(z))$, and

$$\mathbb{U}(y,z)=f^{(-1)}(\min\{f(x),f(z)\})=f^{(-1)}(f(z))=z.$$ \label{eq2.2}

2.2.1. If $f(x)+f(z)\in [-\infty,f(0_L))$, then $\mathbb{U}(x,z)=0_L\leq \mathbb{U}(y,z)$ from 2.1.1.

2.2.2. If $f(x)+f(z)\in Ran(f)$, then $f(z)$ is the same summand of both $f(x)+f(z)$ and $f(z)$ by Remark \ref{ex-1}. Thus $\mathbb{U}(x,z)\nparallel f^{-1}(f(z))$ by (ii). Therefore, from $f(x)\leq 0$ we have $\mathbb{U}(x,z)\leq f^{-1}(f(z))=z=\mathbb{U}(y,z)$ since $f(x)+f(z)\leq f(z)$ and $f$ is injective increasing.

2.3. If $0\leq f(x)\leq f(y)$, then $\mathbb{U}(x,z)=f^{(-1)}(\min\{f(x),f(z)\})=f^{(-1)}(f(z))=z$ and

$$\mathbb{U}(y,z)=f^{(-1)}(\min\{f(y),f(z)\})=f^{(-1)}(f(z))=z,$$

therefore, $\mathbb{U}(x,z)=z=\mathbb{U}(y,z)$.

3. $f(z)> 0$.

3.1 If $f(x)\leq f(y)<0$, then

$$\mathbb{U}(x,z)=f^{(-1)}(\min\{f(x),f(z)\})=x\leq y=f^{(-1)}(\min\{f(y),f(z)\})=\mathbb{U}(y,z).$$

3.2 $f(x)<0=f(y)$.

3.2.1 If $0<f(z)\leq f(a)$ and $z\notin I_a$, then from (iii) we have

$$\mathbb{U}(x,z)=f^{(-1)}(\min\{f(x),f(z)\})=x\leq z=f^{(-1)}(\min\{f(y)+f(z),f(a)\})=\mathbb{U}(y,z).$$

3.2.2 If $0<f(z)\leq f(a)$ and $z\in I_a$, then from (iii) we have

$$\mathbb{U}(x,z)=f^{(-1)}(\min\{f(x),f(z)\})=x\leq a=\mathbb{U}(y,z).$$

3.2.3 If $f(z)> f(a)$, then from (iii), we have

$$\mathbb{U}(x,z)=f^{(-1)}(\min\{f(x),f(z)\})=x\leq a=\mathbb{U}(y,z).$$

3.3 $f(x)=0<f(y)\leq f(a)$.

3.3.1 If $0< f(z)\leq f(a)$, and $z\in I_a$, then $\mathbb{U}(x,z)= a=\mathbb{U}(y,z)$.

3.3.2 $0< f(z)\leq f(a)$ and $y,z\notin I_a$.

3.3.2.1 If $f(y)+f(z)\leq f(a)$, then from (ii), we have

$$\mathbb{U}(y,z)=f^{(-1)}(\min\{f(y)+f(z),f(a)\})=f^{(-1)}(f(y)+f(z))\geq f^{(-1)}(z)=z=\mathbb{U}(x,z).$$

3.3.2.2 If $f(y)+f(z)\geq f(a)$, then from $z\notin I_a$ and $f(z)\leq f(a)$, we have

$$\mathbb{U}(y,z)=f^{(-1)}(\min\{f(y)+f(z),f(a)\})=a\geq z=f^{(-1)}(\min\{f(x)+f(z),f(a)\})=\mathbb{U}(x,z).$$

3.3.3 If $0< f(z)\leq f(a)$, $y\in I_a$ and $z\notin I_a$, then

$$\mathbb{U}(y,z)=a\geq z=f^{(-1)}(\min\{f(x)+f(z),f(a)\})=\mathbb{U}(x,z).$$

3.3.4 If $f(z)>f(a)$, then $\mathbb{U}(y,z)=a=\mathbb{U}(x,z)$.

3.4 If $f(x)< 0< f(y)$, then

\begin{eqnarray}\label{eq-2.2}
\mathbb{U}(x,z)=f^{(-1)}(\min\{f(x),f(z)\})=x.
\end{eqnarray}

3.4.1 $(f(y),f(z))\in (0,f(a)]^2$.

3.4.1.1 $y,z\notin I_a$.

3.4.1.1.1 If $f(y)+f(z)\leq f(a)$ then, because of \eqref{eq-2.2} and (ii), we have

$$\mathbb{U}(y,z)=f^{(-1)}(\min\{f(y)+f(z),f(a)\})=f^{(-1)}(f(y)+f(z))\geq f^{(-1)}(y)=y\geq x=\mathbb{U}(x,z).$$

3.4.1.1.2 If $f(y)+f(z)\geq f(a)$ then, because of (iv), we have

$$\mathbb{U}(y,z)=f^{(-1)}(\min\{f(y)+f(z),f(a)\})=f^{(-1)}(f(a))=a\geq x=\mathbb{U}(x,z).$$

3.4.1.2 If $z\in I_a$ or $y\in I_a$, then from \eqref{eq-2.2} and (iv) we have $\mathbb{U}(y,z)=a\geq x=\mathbb{U}(x,z)$.

3.4.2 $(f(y),f(z))\in (f(a),f(e_2)]^2$.

3.4.2.1 If $f(y)\leq f(z)$, then

$$\mathbb{U}(y,z)=f^{(-1)}(\min\{f(y),f(z)\})= f^{(-1)}(f(y))=y\geq x= \mathbb{U}(x,z).$$

3.4.2.2 If $f(y)\geq f(z)$, then from (iv) we have

$$\mathbb{U}(y,z)=f^{(-1)}(\min\{f(y),f(z)\})= f^{(-1)}(f(z))=z\geq x= \mathbb{U}(x,z).$$

3.4.3 $(f(y),f(z))\in (f(e_2),+\infty]^2\cup (f(e_2),+\infty]\times (f(a),f(e_2)]\cup (f(a),f(e_2)]\times(f(e_2),+\infty]$.

3.4.3.1 If $f(y)\geq f(z)$, then

$$\mathbb{U}(y,z)=f^{(-1)}(\max\{f(y),f(z)\})= f^{(-1)}(y)=y\geq x=\mathbb{U}(x,z).$$

3.4.3.2 If $f(y)\leq f(z)$, then from (iv), we have

$$\mathbb{U}(y,z)=f^{(-1)}(\max\{f(y),f(z)\})= f^{(-1)}(z)=z\geq x =\mathbb{U}(x,z).$$

3.4.4 If $(f(y),f(z))\in [f(a),+\infty]\times [0,f(a)]\cup [0,f(a)]\times [f(a),+\infty]$, then from (iv), we have

$$\mathbb{U}(y,z)=a\geq x=\mathbb{U}(x,z).$$

3.5 $0< f(x)\leq f(y)\leq f(a)$.

3.5.1 $0< f(z)\leq f(a)$.

3.5.1.1 If $z\in I_a$, then $\mathbb{U}(x,z)=a=\mathbb{U}(y,z)$.

3.5.1.2 $x,y,z\notin I_a$.

3.5.1.2.1 If $f(a)\leq f(x)+f(z)\leq f(y)+f(z)$, then $\mathbb{U}(x,z)=a=\mathbb{U}(y,z)$.

3.5.1.2.2 If $f(x)+f(z)\leq f(a)\leq f(y)+f(z)$ then, because of (v), we have

$$\mathbb{U}(x,z)=f^{(-1)}(\min\{f(x)+f(z),f(a)\})=f^{(-1)}(f(x)+f(z))\leq a=\mathbb{U}(y,z).$$

3.5.1.2.3 If $f(x)+f(z)\leq f(y)+f(z)\leq f(a)$ then, from (ii), we have

$$\mathbb{U}(x,z)=f^{(-1)}(f(x)+f(z))\leq f^{(-1)}(f(y)+f(z))=\mathbb{U}(y,z).$$

3.5.1.3 If $z\notin I_a$ and $x\in I_a$, then $y\in I_a$ by Lemma \ref{lem-1}, subsequently, $\mathbb{U}(x,z)=a=\mathbb{U}(y,z)$.

3.5.1.4 If $z,x\notin I_a$ and $y\in I_a$, then from (v), we have $\mathbb{U}(x,z)=f^{(-1)}(f(x)+f(z))\leq a=\mathbb{U}(y,z)$.

3.5.2 If $f(z)> f(a)$, then $\mathbb{U}(x,z)=a=\mathbb{U}(y,z)$.

3.6 $0\leq f(x)\leq f(a)< f(y)$.

3.6.1 $0< f(z)\leq f(a)$.

3.6.1.1 $x,z\notin I_a$.

3.6.1.1.1 If $f(x)\neq 0$ and $f(x)+f(z)\leq f(a)$ then, because of (v), we have

$$\mathbb{U}(x,z)=f^{(-1)}(\min\{f(x)+f(z),f(a)\})=f^{(-1)}(f(x)+f(z))\leq a=\mathbb{U}(y,z).$$

3.6.1.1.2 If $f(x)=0$, then $\mathbb{U}(x,z)=z\leq a=\mathbb{U}(y,z)$.

3.6.1.1.3 If $f(x)+f(z)\geq f(a)$, then $\mathbb{U}(x,z)=f^{(-1)}(\min\{f(x)+f(z),f(a)\})=a=\mathbb{U}(y,z)$.

3.6.1.2 If $x\in I_a$ or $z\in I_a$, then $\mathbb{U}(x,z)=a=\mathbb{U}(y,z)$.

3.6.2 If $(f(y),f(z))\in (f(a),f(e_2)]^2$, then from (iv) and $f(y),f(z)\geq f(a)$, we have

$$\mathbb{U}(x,z)=a\leq f^{(-1)}(\min\{f(y),f(z)\})=\mathbb{U}(y,z).$$

3.6.3 If $(f(y),f(z))\in (f(e_2),+\infty]^2\cup (f(e_2),+\infty]\times (f(a),f(e_2)]\cup (f(a),f(e_2)]\times (f(e_2),+\infty]$, then from (iv) and $f(y),f(z)\geq f(a)$, we have

$$\mathbb{U}(x,z)=a\leq f^{(-1)}(\max\{f(y),f(z)\})=\mathbb{U}(y,z).$$

3.7 $f(a)< f(x)\leq f(y)$.

3.7.1 If $0< f(z)\leq f(a)$, then $\mathbb{U}(x,z)=a=\mathbb{U}(y,z)$.

3.7.2 $f(a)< f(z)\leq f(e_2)$.

3.7.2.1 If $f(a)< f(x)\leq f(y)\leq f(e_2)$, then $\mathbb{U}(x,z)=f^{(-1)}(\min\{f(x),f(z)\})$ and
$$\mathbb{U}(y,z)=f^{(-1)}(\min\{f(y),f(z)\}).$$

3.7.2.1.1 If $f(x)\leq f(y)\leq f(z)$, then $\mathbb{U}(x,z)=x\leq y=\mathbb{U}(y,z)$.

3.7.2.1.2 If $f(z)\leq f(x)\leq f(y)$, then $\mathbb{U}(x,z)=z=\mathbb{U}(y,z)$.

3.7.2.1.3 If $f(x)\leq f(z)\leq f(y)$ then, because of (iv), we have $\mathbb{U}(x,z)=x\leq z=\mathbb{U}(y,z)$.

3.7.2.2 If $f(a)< f(x)\leq f(e_2)< f(y)$, then $\mathbb{U}(x,z)=f^{(-1)}(\min\{f(x),f(z)\})$ and $$\mathbb{U}(y,z)=f^{(-1)}(\max\{f(y),f(z)\})=f^{(-1)}(f(y))=y.$$

3.7.2.2.1 If $f(x)\leq f(z)$, then $\mathbb{U}(x,z)=x\leq y=\mathbb{U}(y,z)$.

3.7.2.2.2 If $f(x)\geq f(z)$ then, because of (iv), we have $\mathbb{U}(x,z)=z\leq y=\mathbb{U}(y,z)$.

3.7.2.3 If $f(e_2)< f(x)\leq f(y),$ then $\mathbb{U}(x,z)=x\leq y=\mathbb{U}(y,z)$.

3.7.3 If $f(z)> f(e_2)$, then $\mathbb{U}(x,z)=f^{(-1)}(\max\{f(x),f(z)\})$ and $\mathbb{U}(y,z)=f^{(-1)}(\max\{f(y),f(z)\})$.

3.7.3.1 If $f(x)\leq f(y)\leq f(z)$, then $\mathbb{U}(x,z)=z=\mathbb{U}(y,z)$.

3.7.3.2 If $f(x)\leq f(z)\leq f(y)$ then, because of (iv), we obtain $\mathbb{U}(x,z)=z\leq y=\mathbb{U}(y,z)$.

3.7.3.3 If $f(z)\leq f(x)\leq f(y)$, then $\mathbb{U}(x,z)=x\leq y=\mathbb{U}(y,z)$.
\item
\textbf{Associativity.} Let $x,y,z\in L.$ We need to verify $%
\mathbb{U}(x,\mathbb{U}(y,z))=\mathbb{U}(\mathbb{U}(x,y),z).$
The proof is split into all possible cases as follows.

1. $f(x)< 0$.

1.1. $f(y)< 0$.

1.1.1. $f(z)< 0$.

1.1.1.1 If $f(x)+f(y)\in Ran(f)$ and $f(y)+f(z)\in Ran(f)$, then
\begin{eqnarray*}
\mathbb{U}(\mathbb{U}(x,y),z)&=&\mathbb{U}(f^{(-1)}(\min\{(f(x)+f(y)), f(a)\}),z)\\
&=&f^{(-1)}(f\circ f^{(-1)}(f(x)+f(y))+f(z))  \mbox{~~~~(since $f(x)+f(y)\leq 0\leq f(a)$)}\\
&=&f^{(-1)}(f(x)+f(y)+f(z)).\\
\mathbb{U}(x,\mathbb{U}(y,z))&=&\mathbb{U}(x,f^{(-1)}(\min\{(f(y)+f(z)),f(a)\})\\
&=&f^{(-1)}(f(x)+f(y)+f(z)).
\end{eqnarray*}
1.1.1.2 If $f(x)+f(y)\in [-\infty, f(0_L))$ and $f(y)+f(z)\in [-\infty, f(0_L))$, then
\begin{eqnarray*}
\mathbb{U}(\mathbb{U}(x,y),z)&=&\mathbb{U}(f^{(-1)}(\min\{(f(x)+f(y)), f(a)\}),z)\\
&=&\mathbb{U}(\wedge L,z) \\
&=&\mathbb{U}(0_L,z).\\
&=&f^{(-1)}(f(z)+f(0_L))\\
&=&0_L.\\
\mathbb{U}(x,\mathbb{U}(y,z))&=&\mathbb{U}(x,f^{(-1)}(\min\{(f(y)+f(z)),f(a)\})\\
&=&\mathbb{U}(x,\wedge L) \\
&=&\mathbb{U}(x,0_L) \\
&=&f^{(-1)}(f(x)+f(0_L))\\
&=&0_L.
\end{eqnarray*}
1.1.1.3 If $f(x)+f(y)\in [-\infty, f(0_L))$ and $f(y)+f(z)\in Ran(f)$, then
\begin{eqnarray*}
\mathbb{U}(\mathbb{U}(x,y),z)&=&\mathbb{U}(f^{(-1)}(\min\{(f(x)+f(y)), f(a)\}),z)\\
&=&\mathbb{U}(\wedge L,z) \\
&=&\mathbb{U}(0_L,z)\\
&=&f^{(-1)}(f(z)+f(0_L))\\
&=&0_L.\\
\mathbb{U}(x,\mathbb{U}(y,z))&=&\mathbb{U}(x,f^{(-1)}(\min\{(f(y)+f(z)),f(a)\})\\
&=&f^{(-1)}(f(x)+f(y)+f(z))\\
&=&\wedge L\\
&=& 0_L.
\end{eqnarray*}
1.1.1.4 If $f(x)+f(y)\in Ran(f)$ and $f(y)+f(z)\in [-\infty, f(0_L))$, then
\begin{eqnarray*}
\mathbb{U}(\mathbb{U}(x,y),z)&=&\mathbb{U}(f^{(-1)}(\min\{(f(x)+f(y)), f(a)\}),z)\\
&=&f^{(-1)}(f(x)+f(y)+f(z)) \\
&=&\wedge L\\
&=& 0_L.\\
\mathbb{U}(x,\mathbb{U}(y,z))&=&\mathbb{U}(x,f^{(-1)}(\min\{(f(y)+f(z)),f(a)\})\\
&=&\mathbb{U}(x,\wedge L)\\
&=&\mathbb{U}(x,0_L)\\
&=&0_L.
\end{eqnarray*}
1.1.2. $0\leq f(z)$.

1.1.2.1 If $f(x)+f(y)\in Ran(f)$, then
\begin{eqnarray*}
\mathbb{U}(\mathbb{U}(x,y),z)&=&\mathbb{U}(f^{(-1)}(\min\{(f(x)+f(y)), f(a)\}),z)\\
&=&f^{(-1)}(\min\{f\circ f^{(-1)}(f(x)+f(y)), f(z)\})  \mbox{~~~~(since $f(x)+f(y)\leq 0\leq f(a)$)}\\
&=&f^{(-1)}(f(x)+f(y)).\\
\mathbb{U}(x,\mathbb{U}(y,z))&=&\mathbb{U}(x,f^{(-1)}(\min\{f(y), f(z)\})\\
&=&f^{(-1)}(\min\{f(x)+f(y), f(a)\})\\
&=&f^{(-1)}(f(x)+f(y)).
\end{eqnarray*}
1.1.2.2 If $f(x)+f(y)\in [-\infty, f(0_L))$, then
\begin{eqnarray*}
\mathbb{U}(\mathbb{U}(x,y),z)&=&\mathbb{U}(f^{(-1)}(\min\{(f(x)+f(y)), f(a)\}),z)\\
&=&\mathbb{U}(\wedge L,z)  \\
&=&0_L.\\
\mathbb{U}(x,\mathbb{U}(y,z))&=&\mathbb{U}(x,f^{(-1)}(\min\{f(y), f(z)\})\\
&=&f^{(-1)}(\min\{f(x)+f(y), f(a)\})\\
&=&\wedge L\\
&=&0_L.
\end{eqnarray*}

1.2. $0\leq f(y)\leq f(a)$.

1.2.1. $f(z)< 0$.

1.2.1.1 If $f(x)+f(z)\in Ran(f)$, then
\begin{eqnarray*}
\mathbb{U}(\mathbb{U}(x,y),z)&=&\mathbb{U}(z,\mathbb{U}(x,y))\\
&=&f^{(-1)}((f(x)+f(z)).    \mbox{~~~~~(by 1.1.2.1)}\\
\mathbb{U}(x,\mathbb{U}(y,z))&=&\mathbb{U}(x,f^{(-1)}(\mbox{min}\{f(y), f(z)\}))\\
&=&f^{(-1)}((f(x)+f(z)).
\end{eqnarray*}
1.2.1.2 If $f(x)+f(z)\in [-\infty, f(0_L))$, then by 1.1.2.2,
\begin{eqnarray*}
\mathbb{U}(\mathbb{U}(x,y),z)&=&\mathbb{U}(z,\mathbb{U}(x,y))=0_L. \\
\mathbb{U}(x,\mathbb{U}(y,z))&=&\mathbb{U}(x,f^{(-1)}(\mbox{min}\{f(y), f(z)\}))\\
&=&f^{(-1)}((f(x)+f(z))\\
&=&\wedge L \\
&=&0_L.
\end{eqnarray*}
1.2.2. If $0\leq f(z)\leq f(a)$, then $\min\{f(y)+f(z),f(a)\}\in Ran(f).$

1.2.2.1 If $y,z\notin I_a$, then
\begin{eqnarray*}
\mathbb{U}(\mathbb{U}(x,y),z)&=&\mathbb{U}(f^{(-1)}(\min\{f(x), f(y)\}),z)\\
&=&f^{(-1)}(\min\{f(x), f(z)\})\\
&=&f^{(-1)}(f(x)).\\
\mathbb{U}(x,\mathbb{U}(y,z))&=&\mathbb{U}(x,f^{(-1)}(\min\{(f(y)+f(z)), f(a)\}))\\
&=&f^{(-1)}(f(x)).
\end{eqnarray*}
1.2.2.2 If $y\in I_a$ or $z\in I_a$, then
$\mathbb{U}(\mathbb{U}(x,y),z)=\mathbb{U}(x,z)=x=\mathbb{U}(x,a)=\mathbb{U}(x,\mathbb{U}(y,z))$.

1.2.3. If $f(a)< f(z)$, then
\begin{eqnarray*}
\mathbb{U}(\mathbb{U}(x,y),z)&=&\mathbb{U}(f^{(-1)}(\min\{f(x), f(y)\},z)\\
&=&f^{(-1)}(\min\{f(x), f(z)\}) \\
&=&f^{(-1)}((f(x)).\\
\mathbb{U}(x,\mathbb{U}(y,z))&=&\mathbb{U}(x,a)\\
&=&f^{(-1)}((f(x)).
\end{eqnarray*}
1.3. $f(a)< f(y)$.

1.3.1. If $f(z)< 0$, then
\begin{eqnarray*}
\mathbb{U}(\mathbb{U}(x,y),z)&=&\mathbb{U}(f^{(-1)}(\mbox{min}\{f(x), f(y)\},z)=\mathbb{U}(x,z). \\
\mathbb{U}(x,\mathbb{U}(y,z))&=&\mathbb{U}(x,f^{(-1)}(\min\{f(y), f(z)\}))=\mathbb{U}(x,z).
\end{eqnarray*}
1.3.2. If $0\leq f(z)\leq f(a)$, then
\begin{eqnarray*}
\mathbb{U}(\mathbb{U}(x,y),z)&=&\mathbb{U}(f^{(-1)}(\min\{f(x), f(y)\})),z)\\
&=&\mathbb{U}(x,z)\\
&=&f^{(-1)}(\min\{f(x), f(z)\})\\
&=&x.\\
\mathbb{U}(x,\mathbb{U}(y,z)&=&\mathbb{U}(x,a)=f^{(-1)}(\min\{f(x), f(a)\})\}=x.
\end{eqnarray*}
1.3.3 $f(z)> f(a).$

1.3.3.1 If $(f(y),f(z))\in (f(a),f(e_2)]^2$, then
\begin{eqnarray*}
\mathbb{U}(\mathbb{U}(x,y),z)&=&\mathbb{U}(f^{(-1)}(\mbox{min}\{f(x), f(y)\})),z)\\
&=&\mathbb{U}(x,z)\\
&=&f^{(-1)}(\mbox{min}\{f(x), f(z)\})\\
&=&x.\\
\mathbb{U}(x,\mathbb{U}(y,z))&=&f^{(-1)}(\min\{f(x),f\circ f^{(-1)}(\min \{f(y),f(z)\})\})=x.
\end{eqnarray*}
1.3.3.2 If $(f(y),f(z))\in [f(e_2),+\infty]^2\cup [f(e_2),+\infty]\times (f(a),f(e_2)]\cup (f(a),f(e_2)]\times [f(e_2),+\infty]$, then
\begin{eqnarray*}
\mathbb{U}(\mathbb{U}(x,y),z)&=&\mathbb{U}(f^{(-1)}(\min\{f(x), f(y)\})),z)\\
&=&\mathbb{U}(x,z)\\
&=&f^{(-1)}(\mbox{min}\{f(x), f(z)\})\\
&=&x.\\
\mathbb{U}(x,\mathbb{U}(y,z)&=&\mathbb{U}(x,f^{(-1)}(\{\max\{f(y),f(z)\})))\\
&=&f^{(-1)}(\min\{f(x), f\circ f^{(-1)}(\{\max\{f(y),f(z)\})\})\}\\
&=&x.
\end{eqnarray*}

2. $0\leq f(x)\leq f(a)$.

2.1. $f(y)< 0$.

2.1.1. $f(z)< 0$.

2.1.1.1 If $f(y)+f(z)\in Ran(f)$ and $f(x)+f(y)\in Ran(f)$, then
\begin{eqnarray*}
\mathbb{U}(\mathbb{U}(x,y),z)&=&\mathbb{U}(z,\mathbb{U}(x,y)) \mbox{~~~~~~~~~~~(by the commutativity of $\mathbb{U}$)}\\
&=&f^{(-1)}(f(z)+f(y)). \mbox{~~~(by 1.2.1.1)}\\
\mathbb{U}(x,\mathbb{U}(y,z))&=&\mathbb{U}(\mathbb{U}(y,z),x)\mbox{~~~~~~~~~~~~(by the commutativity of $\mathbb{U}$)}\\
&=&f^{(-1)}(f(z)+f(y)). \mbox{~~~(by 1.1.2.1)}
\end{eqnarray*}
2.1.1.2 If $f(y)+f(z)\in [-\infty, f(0_L))$, then
\begin{eqnarray*}
\mathbb{U}(\mathbb{U}(x,y),z)&=&\mathbb{U}(z,\mathbb{U}(x,y)) \mbox{~~~~~~~~~~(by the commutativity of $\mathbb{U}$)}\\
&=&0_L. \mbox{~~~~~~~~~~~~~~~~~~~~~~~(by 1.2.1.2)}\\
\mathbb{U}(x,\mathbb{U}(y,z))&=&\mathbb{U}(\mathbb{U}(y,z),x)\mbox{~~~~~~~~~~~(by the commutativity of $\mathbb{U}$)}\\
&=&f^{(-1)}(f(z)+f(y)) \mbox{~~~(by 1.1.2.1)}\\
&=&\wedge L\\
&=& 0_L.
\end{eqnarray*}
2.1.2. If $0\leq f(z)\leq f(a)$, then
\begin{eqnarray*}
\mathbb{U}(\mathbb{U}(x,y),z)&=&\mathbb{U}(f^{(-1)}(\mbox{min}\{f(x),f(y)\}),z),\\
&=&\mathbb{U}(f^{(-1)}(f(y)),z)\\
&=&\mathbb{U}(y,z)\\
&=&f^{(-1)}(\mbox{min}\{f(y),f(z)\})\\
&=&y.\\
\mathbb{U}(x,\mathbb{U}(y,z))&=&\mathbb{U}(x,f^{(-1)}(\mbox{min}\{f(y),f(z)\}))\\
&=&\mathbb{U}(x,y)\\
&=&f^{(-1)}(\mbox{min}\{f(x),f(y)\})\\
&=&y.
\end{eqnarray*}
2.1.3. If $f(z)> f(a)$, then
\begin{eqnarray*}
\mathbb{U}(\mathbb{U}(x,y),z)&=&\mathbb{U}(f^{(-1)}(\mbox{min}\{f(x),f(y)\}),z),\\
&=&\mathbb{U}(f^{(-1)}(f(y)),z)\\
&=&\mathbb{U}(y,z)\\
&=&f^{(-1)}(\mbox{min}\{f(y),f(z)\})\\
&=&y.\\
\mathbb{U}(x,\mathbb{U}(y,z))&=&\mathbb{U}(x,f^{(-1)}(\mbox{min}\{f(y),f(z)\}))\\
&=&\mathbb{U}(x,y)\\
&=&f^{(-1)}(\mbox{min}\{f(x),f(y)\})\\
&=&y.
\end{eqnarray*}

2.2. $0\leq f(y)\leq f(a)$.

2.2.1. If $f(z)< 0$, then
\begin{eqnarray*}
\mathbb{U}(\mathbb{U}(x,y),z)&=&\mathbb{U}(z,\mathbb{U}(x,y)) \mbox{~~~~~~~~~~(by the commutativity of $\mathbb{U}$)}\\
&=&f^{(-1)}(f(z)). \mbox{~~~~~~~~~~~(by 1.2.2.1-1.2.2.2)}\\
\mathbb{U}(x,\mathbb{U}(y,z))&=&\mathbb{U}(x,f^{(-1)}(\mbox{min}\{f(y),f(z)\}))\\
&=&\mathbb{U}(x,z)\\
&=&f^{(-1)}(\mbox{min}\{f(x),f(z)\})\\
&=&f^{(-1)}(f(z)).
\end{eqnarray*}
2.2.2. If $0\leq f(z)\leq f(a)$, then both $\min\{f(x)+f(y),f(a)\}\in Ran(f)$ and $\min\{f(y)+f(z),f(a)\}$ belong to $Ran(f)$.

2.2.2.1 If $x,y,z\notin I_a$, then
\begin{eqnarray*}
\mathbb{U}(\mathbb{U}(x,y),z)&=&\mathbb{U}(f^{(-1)}(\mbox{min}\{f(x)+f(y),f(a)\}),z)\\
&=&f^{(-1)}(\mbox{min}\{f\circ f^{(-1)}(\mbox{min}\{f(x)+f(y),f(a)\})+f(z),f(a)\})\\
&=&f^{(-1)}(\mbox{min}\{(\mbox{min}\{f(x)+f(y),f(a)\})+f(z),f(a)\})\\
&=&f^{(-1)}(\mbox{min}\{\mbox{min}\{(f(x)+f(y)+f(z),f(a)+f(z)\},f(a)\})\\
&=&f^{(-1)}(\mbox{min}\{f(x)+f(y)+f(z),f(a)+f(z),f(a)\})\\
&=&f^{(-1)}(\mbox{min}\{f(x)+f(y)+f(z),f(a)\}).   \mbox{~~~~~(since $f(a)+f(z)\geq f(a)$)}.\\
\mathbb{U}(x,\mathbb{U}(y,z))&=&\mathbb{U}(\mathbb{U}(y,z),x) \mbox{~~~~~~~~~~~~~~~~~~~~~~~~~~~~~~~~~~~~~~(by the commutativity of $\mathbb{U}$)}\\
&=&f^{(-1)}(\mbox{min}\{f(x)+f(y)+f(z),f(a)\}).  \mbox{~~~~~(by 2.2.2.)}
\end{eqnarray*}
2.2.2.2 If $x,y,z\in I_a$ or $x,z\in I_a$, then $\mathbb{U}(\mathbb{U}(x,y),z)=a=\mathbb{U}(x,\mathbb{U}(y,z))$.

2.2.2.3 If $y,z\in I_a$ and $x\notin I_a$, then
$$\mathbb{U}(\mathbb{U}(x,y),z)=a=f^{(-1)}(\min\{f(a)+f(x),f(a)\})=\mathbb{U}(x,\mathbb{U}(y,z)).$$

2.2.2.4 The case $x,y\in I_a$ and $z\notin I_a$ is analogous to 2.2.2.3.\\
2.2.2.5 If $x,y\notin I_a$ and $z\in I_a$, then
\begin{eqnarray*}
\mathbb{U}(\mathbb{U}(x,y),z)=\mathbb{U}(f^{(-1)}(\min\{f(x)+f(y),f(a)\},z)=a=\mathbb{U}(x,a)=\mathbb{U}(x,\mathbb{U}(y,z)).
\end{eqnarray*}
2.2.2.6 Both the case $x\in I_a$ and $y,z\notin I_a$ and the case $y\in I_a$ and $x,z\notin I_a$ are analogous to 2.2.2.5.\\
2.2.3. $f(a)< f(z)$.

2.2.3.1 If $x,y\notin I_a$, then from $f^{(-1)}(\min\{f(x)+f(y),f(a)\})\in [0,f(a)]$, we have
\begin{eqnarray*}
\mathbb{U}(\mathbb{U}(x,y),z)=\mathbb{U}(f^{(-1)}(\mbox{min}\{f(x)+f(y),f(a)\}),z)=a=\mathbb{U}(x,a)=\mathbb{U}(x,\mathbb{U}(y,z)).
\end{eqnarray*}

2.2.3.2 If $x,y\in I_a$, then $\mathbb{U}(\mathbb{U}(x,y),z)=\mathbb{U}(a,z)=a=\mathbb{U}(x,a)=\mathbb{U}(x,\mathbb{U}(y,z))$.

2.2.3.3 If $x\notin I_a$ and $y\in I_a$, then $\mathbb{U}(\mathbb{U}(x,y),z)=\mathbb{U}(a,z)=a=\mathbb{U}(x,a)=\mathbb{U}(x,\mathbb{U}(y,z))$.

2.2.3.4 If $x\in I_a$ and $y\notin I_a$, then
\begin{eqnarray*}
\mathbb{U}(\mathbb{U}(x,y),z)=\mathbb{U}(a,z)=a=\mathbb{U}(x,a)=\mathbb{U}(x,\mathbb{U}(y,z)).
\end{eqnarray*}

2.3 $f(a)< f(y)$.

2.3.1 If $f(z)<0$, then
\begin{eqnarray*}
\mathbb{U}(\mathbb{U}(x,y),z)=\mathbb{U}(a,z)=z=\mathbb{U}(x,f^{(-1)}(\mbox{min}\{f(y),f(z))\})=\mathbb{U}(x,\mathbb{U}(y,z)).
\end{eqnarray*}
2.3.2. If $0\leq f(z)\leq f(a)$, then from 2.2.3.1-2.2.3.4, we have
\begin{eqnarray*}
\mathbb{U}(\mathbb{U}(x,y),z)=\mathbb{U}(z,\mathbb{U}(x,y))=a=\mathbb{U}(x,a)=\mathbb{U}(x,\mathbb{U}(y,z)).
\end{eqnarray*}
2.3.3. $f(a)< f(z)$.

2.3.3.1 If $(f(y),f(z))\in (f(a),f(e_2)]^2$, then $$\mathbb{U}(\mathbb{U}(x,y),z)=\mathbb{U}(a,z)=a=\mathbb{U}(x,f^{(-1)}(\min\{f(y),f(z)\}))=\mathbb{U}(x,\mathbb{U}(y,z)).$$

2.3.3.2 If $(f(y),f(z))\in [f(e_2),+\infty]^2\cup [f(e_2),+\infty]\times (f(a),f(e_2)]\cup (f(a),f(e_2)]\times [f(e_2),+\infty]$, then
\begin{eqnarray*}
\mathbb{U}(\mathbb{U}(x,y),z)=\mathbb{U}(a,z)=a=\mathbb{U}(x,f^{(-1)}(\max\{f(y),f(z)\}))=\mathbb{U}(x,\mathbb{U}(y,z)).
\end{eqnarray*}
3. $f(a)< f(x)$.

3.1. $f(y)< 0$.

3.1.1. If $f(z)< 0$, then
\begin{eqnarray*}
\mathbb{U}(\mathbb{U}(x,y),z)&=&\mathbb{U}(z,\mathbb{U}(x,y))   \mbox{~~~~~~~~~~(by the commutativity of $\mathbb{U}$)}\\
&=&\mathbb{U}(z,y)      \mbox{~~~~~~~~~~~~~~~~~(by 1.3.1.)}\\
&=&f^{(-1)}(f(y)+f(z)). \\
\mathbb{U}(x,\mathbb{U}(y,z))&=&\mathbb{U}(\mathbb{U}(y,z),x)   \mbox{~~~~~~~~~~(by the commutativity of $\mathbb{U}$)}\\
&=&f^{(-1)}(f(y)+f(z)).    \mbox{~~~~~~(by 1.1.2.)}
\end{eqnarray*}
3.1.2. If $0\leq f(z)\leq f(a)$, then
\begin{eqnarray*}
\mathbb{U}(\mathbb{U}(x,y),z)&=&\mathbb{U}(z,\mathbb{U}(x,y))   \mbox{~~~~~~(by the commutativity of $\mathbb{U}$)}\\
&=&\mathbb{U}(z,y)      \mbox{~~~~~~~~~~~~(by 2.3.1.)}\\
&=&f^{(-1)}(\min\{f(z),f(y)\})\\
&=&y.\\
\mathbb{U}(x,\mathbb{U}(y,z))&=&\mathbb{U}(\mathbb{U}(y,z),x)   \mbox{~~~~~(by the commutativity of $\mathbb{U}$)}\\
&=&y.   \mbox{~~~~(by 1.2.3.)}
\end{eqnarray*}
3.1.3 If $f(a)< f(z)$, then
\begin{eqnarray*}
\mathbb{U}(\mathbb{U}(x,y),z)&=&\mathbb{U}(y,z)\\
&=&f^{(-1)}(\mbox{min}\{f(y),f(z)\})\\
&=&y.\\
\mathbb{U}(x,\mathbb{U}(y,z))&=&\mathbb{U}(\mathbb{U}(y,z),x)   \mbox{~~~~~(by the commutativity of $\mathbb{U}$)}\\
&=&y.   \mbox{~~~~(by 1.3.3.)}
\end{eqnarray*}

3.2. $0\leq f(y)\leq f(a)$.

3.2.1. If $f(z)< 0$, then $\mathbb{U}(\mathbb{U}(x,y),z)=\mathbb{U}(a,z)=z=\mathbb{U}(x,z)=\mathbb{U}(x,\mathbb{U}(y,z))$.

3.2.2. If $0\leq f(z)\leq f(a)$, then by 2.3.2, 2.2.3 and the commutativity of $\mathbb{U}$, we have
\begin{eqnarray*}
\mathbb{U}(\mathbb{U}(x,y),z)=\mathbb{U}(z,\mathbb{U}(x,y))=a=\mathbb{U}(\mathbb{U}(y,z),x)=\mathbb{U}(x,\mathbb{U}(y,z)).
\end{eqnarray*}
3.2.3. If $f(a)< f(z)$, then $\mathbb{U}(\mathbb{U}(x,y),z)=\mathbb{U}(a,z)=a=\mathbb{U}(x,a)=\mathbb{U}(x,\mathbb{U}(y,z))$.

3.3. $f(a)< f(y)$.

3.3.1. If $f(z)< 0$, then
\begin{eqnarray*}
\mathbb{U}(\mathbb{U}(x,y),z)&=&\mathbb{U}(z,\mathbb{U}(x,y)   \mbox{~~~~~(by the commutativity of $\mathbb{U}$)}\\
&=&z.      \mbox{~~~~~~~~~~~~~~~~~~(by 1.3.3.)}\\
\mathbb{U}(x,\mathbb{U}(y,z))&=&\mathbb{U}(\mathbb{U}(y,z),x)   \mbox{~~~~(by the commutativity of $\mathbb{U}$)}\\
&=&z.   \mbox{~~~~~~~~~~~~~~~~~~~(by 3.1.3.)}
\end{eqnarray*}
3.3.2. If $0\leq f(z)\leq f(a)$, then
\begin{eqnarray*}
\mathbb{U}(\mathbb{U}(x,y),z)&=&\mathbb{U}(z,\mathbb{U}(x,y)   \mbox{~~~~~~(by the commutativity of $\mathbb{U}$)}\\
&=&a.      \mbox{~~~~~~~~~~~~~~~~~~~(by 2.3.3.)}\\
\mathbb{U}(x,\mathbb{U}(y,z))&=&\mathbb{U}(\mathbb{U}(y,z),x)   \mbox{~~~~~(by the commutativity of $\mathbb{U}$)}\\
&=&a.      \mbox{~~~~~~~~~~~~~~~~~~~~(by 3.2.3.)}
\end{eqnarray*}
3.3.3. $f(a)< f(z)$.

3.3.3.1 If $f(x),f(y),f(z)\notin (f(e_2),+\infty]$, then
\begin{eqnarray*}
\mathbb{U}(\mathbb{U}(x,y),z)&=&\mathbb{U}(f^{(-1)}(\mbox{min}\{f(x),f(y)\}),z)\\
&=&f^{(-1)}(\mbox{min}\{f\circ f^{(-1)}(\mbox{min}\{f(x),f(y)\}),f(z)\})\\
&=&f^{(-1)}(\mbox{min}\{\mbox{min}\{f(x),f(y)\},f(z)\})\\
&=&f^{(-1)}(\mbox{min}\{f(x),f(y),f(z)\}).\\
\mathbb{U}(x,\mathbb{U}(y,z))&=&\mathbb{U}(\mathbb{U}(y,z),x)  \mbox{~~~~~~~~~~~~~~~~~~~~~~~~~(by the commutativity of $\mathbb{U}$)}\\
&=&f^{(-1)}(\mbox{min}\{f(x),f(y),f(z)\}).   \mbox{~~~~(by 3.3.3.)}\\
\end{eqnarray*}
3.3.3.2 If exactly one of $f(x),f(y),f(z)$ belongs to $(f(e_2),+\infty]$, say $f(x)\in (f(e_2),+\infty]$, then
\begin{eqnarray*}
\mathbb{U}(\mathbb{U}(x,y),z)&=&\mathbb{U}(f^{(-1)}(\max\{f(x),f(y)\}),z)\\
&=&U(x,z)\\
&=&f^{(-1)}(\max\{f(x),f(z)\})\\
&=&x.\\
\mathbb{U}(x,\mathbb{U}(y,z))&=&\mathbb{U}(x,f^{(-1)}(\min\{f(y),f(z)\}))  \\
&=&f^{(-1)}(\max\{f(x),f\circ f^{(-1)}(\min\{f(y),f(z)\})\})   \\
&=&x.
\end{eqnarray*}

3.3.3.3 If at least two of $f(x),f(y),f(z)$ belong to $(f(e_2),+\infty]$, then
\begin{eqnarray*}
\mathbb{U}(\mathbb{U}(x,y),z)&=&\mathbb{U}(f^{(-1)}(\max\{f(x),f(y)\}),z)\\
&=&f^{(-1)}(\max\{f\circ f^{(-1)}(\max\{f(x),f(y)\}),f(z)\})\\
&=&f^{(-1)}(\max\{f(x),f(y),f(z)\}.\\
\mathbb{U}(x,\mathbb{U}(y,z))&=&\mathbb{U}(x,f^{(-1)}(\max\{f(y),f(z)\}))  \\
&=&f^{(-1)}(\max\{f(x),f\circ f^{(-1)}(\max\{f(y),f(z)\})\})   \\
&=&f^{(-1)}(\max\{f(x),f(y),f(z)\}.
\end{eqnarray*}
Hence, $\mathbb{U}$ is associative.
\end{enumerate}
Consequently, $\mathbb{U}$ is a 2-uninorm on $L.$
\end{proof}

The following theorem is a dual consequence of Theorem \ref{thm-1}.
\begin{theorem}\label{thm-2}
Let $e_1, a, e_2\in L$ with $0_L\leq e_1\leq a\leq e_2\leq 1_L$, and $f:L\to [-\infty,+\infty]$ be an injective decreasing function with $f(e_1)=0$.
If $f$ satisfies the following five conditions: for all $x,y\in L$,
\begin{enumerate}[{\rm (i)}]
  \item
if $(f(x),f(y))\in [f(a),0]^2\cup [0,+\infty]^2$ then
$$\max\{f(x)+f(y),f(a)\}\in Ran(f)\cup (f(0_L),+\infty],$$
 \item
for all $(f(x),f(y))\in [f(a),0]^2\cup [0,+\infty]^2$, if $f(x)$ and $f(y)$ have at least one same summand $f(z)\in Ran(f)$, then $x\nparallel y$,
\item
if $f(a)\leq f(x)<0<f(y)$ and $e_1<y$, then $x\nparallel y$,
  \item
if $f(a)\geq f(x)> f(y)$ or $f(x)\leq f(a)<0<f(y)$, then $x\nparallel y$,
  \item
for all $f(x),f(y)\in (f(a),0)$, if $f(x)+f(y)\in Ran(f)$ and $f(a)\leq f(x)+f(y)< 0$ then $f^{-1}(f(x)+f(y))\nparallel a$,
\end{enumerate}
then the following function $\mathbb{U}_d: L^2\to L$ is a 2-uninorm, and we call $f$ an additive generator of $\mathbb{U}_d$. For all $x,y\in L$,
\begin{equation}\label{eq-2}
\mathbb{U}_d(x, y)=
\begin{cases}
f^{(-1)}(f(x)+f(y)), & (f(x),f(y))\in [0,+\infty]^2, \\
f^{(-1)}(\max\{f(x)+f(y), f(a)\}), & (f(x),f(y))\in [f(a),0]^2 \mbox{ and } x,y\notin I_a, \\
a, & (f(x),f(y))\in [-\infty,f(a))\times [f(a),0]\\
&~~~~~\cup [f(a),0]\times [-\infty,f(a))\\
&~~~~~\cup [f(a),0]\times \mathcal{G}_a\cup \mathcal{G}_a\times [f(a),0],\\
f^{(-1)}(\min\{f(x),f(y)\}), &  (f(x),f(y))\in [-\infty,f(e_2))^2\cup [-\infty,f(e_2))\\
&~~~~~\times [f(e_2),f(a))\cup [f(e_2),f(a))\times [-\infty,f(e_2)),\\
f^{(-1)}(\max\{f(x),f(y)\}), & otherwise.
\end{cases}
\end{equation}
\end{theorem}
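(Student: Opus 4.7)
The plan is to deduce Theorem \ref{thm-2} from Theorem \ref{thm-1} by the order-reversing substitution on the codomain. Concretely, I set $g := -f : L \to [-\infty,+\infty]$. Since $f$ is injective decreasing with $f(e_1)=0$, the map $g$ is injective increasing with $g(e_1)=0$, and $0 \leq g(a) \leq g(e_2)$ because $f(e_2) \leq f(a) \leq 0$. The proof then reduces to two bookkeeping checks: (a) conditions (i)--(v) for $f$ in Theorem \ref{thm-2} are exactly conditions (i)--(v) for $g$ in Theorem \ref{thm-1}; and (b) the piecewise formula \eqref{eq-2} for $\mathbb{U}_d(x,y)$ coincides with \eqref{eq-1} applied to $g$. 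Granting these, the 2-uninorm axioms for $\mathbb{U}_d$ transfer directly from those already established in Theorem \ref{thm-1}.

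For step (a), the guiding identities are that intervals flip sign, so $[f(a),0]^2 \cup [0,+\infty]^2$ becomes $[0,g(a)]^2 \cup [-\infty,0]^2$; negation exchanges $\max$ with $\min$, so $\max\{f(x)+f(y),f(a)\} = -\min\{g(x)+g(y),g(a)\}$, and $Ran(f) \cup (f(0_L),+\infty]$ becomes $Ran(g) \cup [-\infty,g(0_L))$; and the notion of summand is preserved by negation because $ab > 0 \iff (-a)(-b) > 0$. Because the lattice structure on $L$ is not altered, the conditions involving $x \nparallel y$, $x \in I_a$, and strict comparisons like $x < e_1$ translate unchanged (up to the mirroring of the role of $e_1$ forced by decreasingness of $f$). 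The auxiliary set $\mathcal{G}_a$ for the decreasing $f$ is pointwise the same as $\mathcal{F}_a$ for the increasing $g$, since $f(a) < f(x) < 0 \iff 0 < g(x) < g(a)$ for $x \in I_a$.

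For step (b), the crucial identity is $g^{(-1)}(t) = f^{(-1)}(-t)$. On $Ran(g)$ both sides are the unique preimage, while off $Ran(g)$ it follows from $\{x : g(x) > t\} = \{x : f(x) < -t\}$ together with the fact that the pseudo-inverse uses $\inf$ for increasing generators and $\sup$ for decreasing ones. Combined with the $\max \leftrightarrow \min$ swap, each clause of \eqref{eq-1} applied to $g$ is matched with exactly one clause of \eqref{eq-2} applied to $f$: the sum clauses on $[-\infty,0]^2$ and $[0,f(a)]^2$ for $g$ become the sum clauses on $[0,+\infty]^2$ and $[f(a),0]^2$ for $f$; the ``value equals $a$'' clauses match after identifying $\mathcal{F}_a$ with $\mathcal{G}_a$; and the two trailing branches involving $\min$ and $\max$ exchange positions, matching the last two lines of \eqref{eq-2} against the last two of \eqref{eq-1}.

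The main obstacle is purely bookkeeping: both formulas have five branches with delicate boundary conditions involving $f(a)$, $f(e_2)$, the sets $\mathcal{F}_a$ and $\mathcal{G}_a$, and the incomparability sets $I_a$, and I need to verify that the substitution $g \leftrightarrow -f$ matches them up uniformly at every branch, including the boundary values $f(x) = 0$, $f(x) = f(a)$, and $f(x) = f(e_2)$. Once that correspondence is in place, commutativity, monotonicity, associativity, and the two neutral-element identities $\mathbb{U}_d(e_1,x) = x$ for $x \leq a$ and $\mathbb{U}_d(e_2,x) = x$ for $x \geq a$ are immediate transcriptions of the corresponding properties proved for $\mathbb{U}$ in Theorem \ref{thm-1}, completing the proof.
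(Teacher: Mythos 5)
Your overall strategy --- reduce Theorem \ref{thm-2} to Theorem \ref{thm-1} via $g:=-f$ --- is exactly what the paper intends: its entire ``proof'' is the single sentence that the theorem is a dual consequence of Theorem \ref{thm-1}, so your proposal is a fleshed-out version of the same argument rather than a different route. The branch-by-branch matching of \eqref{eq-2} with \eqref{eq-1}, the identification of $\mathcal{G}_a$ for $f$ with $\mathcal{F}_a$ for $g$, and the translations of conditions (i), (ii), (iv), (v) all check out. Two of your bookkeeping claims, however, are not right as stated.

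First, the identity $g^{(-1)}(t)=f^{(-1)}(-t)$ is false in general, and your justification conflates two different sets: $g^{(-1)}(t)=\inf\{x: g(x)>t\}=\inf\{x: f(x)<-t\}$, whereas $f^{(-1)}(-t)=\sup\{x: f(x)>-t\}$. For $-t$ lying in a ``gap'' strictly between two values of $Ran(f)$ these disagree already on a three-element chain. The identity does hold at every point where it is actually invoked --- sums outside $Ran(f)$ only occur beyond $f(0_L)$ (resp.\ below $g(0_L)$), where one side is $\sup\emptyset=0_L$ and the other is $\inf L=0_L$, and all other arguments of the pseudo-inverse lie in the range by condition (i) --- but you need to say this; the blanket identity cannot be asserted.

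Second, your claim that condition (iii) ``translates unchanged up to mirroring'' papers over a genuine mismatch. The true dual of Theorem \ref{thm-1}(iii) for $g=-f$ reads: if $f(a)\leq f(x)<0<f(y)$ and $y<e_1$, then $x\nparallel y$. The statement you are proving instead requires $e_1<y$; since $f$ is injective decreasing and $f(y)>0=f(e_1)$, the relation $e_1<y$ forces $0>f(y)$, so the hypothesis of (iii) as written is never satisfied and the condition is vacuous. Consequently the duality argument cannot recover Theorem \ref{thm-1}(iii) for $g$ from the stated hypotheses, and the transferred monotonicity proof breaks precisely in the dual of cases 3.2.1--3.2.3 (which are the places where (iii) is used); the paper's own Example 3.3(iii) shows that (iii) cannot simply be dropped. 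This is almost certainly a sign error in the paper's statement, but a correct write-up of the duality must either work from the corrected inequality $y<e_1$ or explicitly flag that the theorem as printed does not follow by dualization.
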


 From Remark \ref{remar2.2}, we have the following corollary.
\begin{corollary}
\begin{enumerate}[{\rm (i)}]
  \item
Taking $e_2=1_L$ in Theorem \ref{thm-1}, we obtain the uni-nullnorm $U_N$ as follows. For all $x,y\in L$,
\begin{equation*}
U_N(x,y)=
\begin{cases}
f^{(-1)}(f(x)+f(y)), & (f(x),f(y))\in  [-\infty, 0]^2, \\
f^{(-1)}(\min\{f(x)+f(y), f(a)\}), & (f(x),f(y))\in [0,f(a)]^2 \mbox{ and } x,y\notin I_a, \\
a, & (f(x),f(y))\in [f(a),+\infty)\times [0,f(a)]\\
&~~~~~~~~~~~~~~\cup [0,f(a)]\times [f(a),+\infty)\\
&~~~~~~~~~~~~~~\cup [0,f(a)]\times \mathcal{F}_a\cup \mathcal{F}_a\times [0,f(a)],\\
f^{(-1)}(\min\{f(x),f(y)\}), & \text{ otherwise}.
\end{cases}
\end{equation*}
  \item
Taking $e_2=1_L$ in Theorem \ref{thm-2}, we have the uni-nullnorm $U_N$ as follows. For all $x,y\in L$,
\begin{equation*}
U_N(x,y)=
\begin{cases}
f^{(-1)}(f(x)+f(y)), & (f(x),f(y))\in (0,+\infty]^2,\\
f^{(-1)}(\max\{f(x)+f(y), f(a)\}), & (f(x),f(y))\in [f(a),0]^2 \mbox{ and } x,y\notin I_a, \\
a, & (f(x),f(y))\in [-\infty,f(a))\times [f(a),0]\\
&~~~~~~~~~~~~~~\cup [f(a),0]\times [-\infty,f(a))\\
&~~~~~~~~~~~~~~\cup [f(a),0]\times \mathcal{G}_a\cup \mathcal{G}_a\times [f(a),0],\\
f^{(-1)}(\max\{f(x),f(y)\}), & \text{ otherwise}.
\end{cases}
\end{equation*}
\item
Taking $e_1=0_L$ in Theorem \ref{thm-1}, we get the null-uninorm as below. For all $x,y\in L$,
\begin{equation*}
N_U(x,y)=
\begin{cases}
f^{(-1)}(\min\{f(x)+f(y), f(a)\}), & (f(x),f(y))\in [0,f(a)]^2\mbox{ and } x,y\notin I_a, \\
a, & (f(x),f(y))\in [f(a),+\infty)\times [0,f(a)]\\
&~~~~~~~~~~~~~~\cup [0,f(a)]\times [f(a),+\infty)\\
&~~~~~~~~~~~~~~\cup [0,f(a)]\times \mathcal{F}_a\cup \mathcal{F}_a\times [0,f(a)],\\
f^{(-1)}(\min\{f(x),f(y)\}), & (f(x),f(y))\in [f(a),f(e_2)]^2, \\
f^{(-1)}(\max\{f(x),f(y)\}), & \text{ otherwise}.
\end{cases}
\end{equation*}
In this case, $f$ is an injective increasing function from $L$ to $[0,+\infty]$ with $f(0_L)=0$ and it satisfies the conditions (i), (ii), (v) in Theorem \ref{thm-1} and\\
\emph{(iv')} if $f(a)\leq f(x)<f(y)$, then $x\nparallel y$.
  \item
Taking $e_1=0_L$ in Theorem \ref{thm-2}, we have the null-uninorm as below. For all $x,y\in L$,
\begin{equation*}
N_U(x,y)=
\begin{cases}
f^{(-1)}(\max\{f(x)+f(y), f(a)\}), & (f(x),f(y))\in [f(a),0]^2 \mbox{ and } x,y\notin I_a, \\
a, & (f(x),f(y))\in [-\infty,f(a))\times [f(a),0]\\
&~~~~~~~~~~~~~~\cup [f(a),0]\times [-\infty,f(a))\\
&~~~~~~~~~~~~~~\cup [f(a),0]\times \mathcal{G}_a\cup \mathcal{G}_a\times [f(a),0],\\
f^{(-1)}(\max\{f(x),f(y)\}), & (f(x),f(y))\in [f(e_2),f(a)]^2, \\
f^{(-1)}(\min\{f(x),f(y)\}), & \text{ otherwise}.
\end{cases}
\end{equation*}
In this case, $f$ is an injective decreasing function from $L$ to $[-\infty,0]$ with $f(0_L)=0$ and it satisfies the conditions (i), (ii), (v) in Theorem \ref{thm-2} and\\
\emph{(iv{''})} if $f(x)<f(y)\leq f(a)$, then $x\nparallel y$.
  \item
Taking $e_1=0_L$ and $e_2=1_L$ in Theorem \ref{thm-1}, we get the nullnorm as follows. For all $x,y\in L$,
\begin{equation*}
N(x,y)=
\begin{cases}
f^{(-1)}(\min\{f(x)+f(y), f(a)\}), & (f(x),f(y))\in [0,f(a)]^2 \mbox{ and } x,y\notin I_a,\\
a, & (f(x),f(y))\in [f(a),+\infty)\times [0,f(a)]\\
&~~~~~~~~~~~~~~\cup [0,f(a)]\times [f(a),+\infty)\\
&~~~~~~~~~~~~~~\cup [0,f(a)]\times \mathcal{F}_a\cup \mathcal{F}_a\times [0,f(a)],\\
f^{(-1)}(\min\{f(x),f(y)\}), & \text{ otherwise}.
\end{cases}
\end{equation*}
In this case, $f$ is an injective increasing function from $L$ to $[0,+\infty]$ with $f(0_L)=0$ and it satisfies the conditions (i), (ii), (v) in Theorem \ref{thm-1} and (iv').
 \item
Taking $e_1=0_L$ and $e_2=1_L$ in Theorem \ref{thm-2}, we obtain the nullnorm as below. For all $x,y\in L$,
\begin{equation*}
N(x,y)=
\begin{cases}
f^{(-1)}(\max\{f(x)+f(y), f(a)\}), & (f(x),f(y))\in [f(a),0]^2 \mbox{ and } x,y\notin I_a,\\
a, & (f(x),f(y))\in [-\infty,f(a))\times [f(a),0]\\
&~~~~~~~~~~~~~~\cup [f(a),0]\times [-\infty,f(a))\\
&~~~~~~~~~~~~~~\cup [f(a),0]\times \mathcal{G}_a\cup \mathcal{G}_a\times [f(a),0],\\
f^{(-1)}(\max\{f(x),f(y)\}), & \text{ otherwise}.
\end{cases}
\end{equation*}
In this case, $f$ is an injective decreasing function from $L$ to $[-\infty,0]$ with $f(0_L)=0$ and it satisfies the conditions (i), (ii), (v) in Theorem \ref{thm-2} and (iv{''}).
  \item
Taking $a=1_L$ in Theorem \ref{thm-1}, we have the uninorm $U$ in \cite{HW2021} as follows. For all $x,y\in L$,
\begin{equation}\label{eq-u}
U(x,y)=
\begin{cases}
f^{(-1)}(f(x)+f(y)), & (f(x),f(y))\in [-\infty,0]^2\cup [0,+\infty]^2,\\
f^{(-1)}(\min\{f(x),f(y)\}), & \text{ otherwise}.
\end{cases}
\end{equation}
In this case, $f$ just needs to satisfy the conditions (i), (ii) and (iii). Subsequently, we further obtain the t-norm and t-conorm given in \cite{SL2021} by taking $e_1=1_L$ and $e_1=0_L$ in Formula \eqref{eq-u}, respectively.
  \item
Taking $a=1_L$ in Theorem \ref{thm-2}, we deduce the uninorm $U$ as below. For all $x,y\in L$,
\begin{equation*}
U(x,y)=
\begin{cases}
f^{(-1)}(f(x)+f(y)), & (f(x),f(y))\in [-\infty,0]^2\cup [0,+\infty]^2,\\
f^{(-1)}(\max\{f(x),f(y)\}), & \text{ otherwise}.
\end{cases}
\end{equation*}
In this case, $f$ just needs to satisfy the conditions (i), (ii) and (iii).
 \end{enumerate}
\end{corollary}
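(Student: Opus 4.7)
The plan is to derive each case of the corollary as a direct specialization of Theorem \ref{thm-1} or Theorem \ref{thm-2} by substituting the indicated value for $e_1$, $e_2$, or $a$. Remark \ref{remar2.2} guarantees that the 2-uninorm $\mathbb{U}$ produced under each substitution is automatically of the claimed type (uni-nullnorm, null-uninorm, nullnorm, or uninorm), so no new associativity/monotonicity argument is required. The remaining work is to simplify the piecewise definition \eqref{eq-1} or \eqref{eq-2} under the substitution, showing that certain branches become vacuous or collapse, and to verify which of the hypotheses (i)--(v) become automatic under the restricted range of $f$.

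For (i) and (ii), taking $e_2 = 1_L$ gives $f(e_2) = f(1_L)$, which is the maximum of $f(L)$. The intervals $(f(e_2), +\infty]$ that appear in the $\max$-branch of \eqref{eq-1} then contain no point of $\mathrm{Ran}(f)$, so that branch is vacuous, and the "otherwise" clause $f^{(-1)}(\min\{f(x),f(y)\})$ absorbs every pair not covered by the three remaining non-trivial clauses; this is exactly the uni-nullnorm formula stated in (i). The dual argument applied to Theorem \ref{thm-2} yields (ii).

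For (iii)--(vi), taking $e_1 = 0_L$ gives $f(e_1) = f(0_L) = 0$ with $f$ increasing, so no $x \in L$ satisfies $f(x) < 0$; therefore the clause $(f(x),f(y)) \in [-\infty,0]^2$ of \eqref{eq-1} is only triggered at $x = y = 0_L$, where it returns $0_L$ and is thus absorbed into the $\min$-clause. Conditions (iii) and (iv) of Theorem \ref{thm-1} concern strict inequalities $f(x)<0$ or $f(x)<0<f(a)\leq f(y)$; the first becomes vacuous and the second collapses on the remaining boundary $f(x) = 0$, where it is implied by the surviving condition (iv'). Combining this with $e_2 = 1_L$ gives (v) and (vi) by repeating the simplification of the preceding paragraph; the decreasing-$f$ duals deliver (iv) and (vi).

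For (vii) and (viii), taking $a = 1_L$ gives $f(a) = \max f(L)$, so both $(f(a),+\infty]$ and $\mathcal{F}_a$ are empty, every "output $= a$" branch vanishes, and the first two non-trivial clauses merge into the single formula $f^{(-1)}(f(x)+f(y))$ on $[-\infty,0]^2\cup[0,+\infty]^2$. Only conditions (i), (ii), (iii) survive meaningfully, since the hypotheses of (iv) and (v) can no longer be met once $f(a)$ is the top of $\mathrm{Ran}(f)$; this recovers the uninorm formula \eqref{eq-u} from \cite{HW2021}, and further specializing to $e_1 = 1_L$ or $e_1 = 0_L$ reduces the two non-trivial branches to a single summation branch, giving the t-norm or t-conorm of \cite{SL2021}. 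The main obstacle is the careful bookkeeping of branches: under each substitution one must track precisely which pieces of $\mathrm{Ran}(f)$ and of the incomparability sets $I_a$, $\mathcal{F}_a$, $\mathcal{G}_a$ survive, and confirm that every hypothesis dropped in the corollary is either vacuous or implied by the surviving ones, especially at the boundary values $f(x) = 0$ and $f(x) = f(a)$.
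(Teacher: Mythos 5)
Your proposal is correct and follows essentially the same route as the paper, which offers no written proof for this corollary and simply derives it from Remark \ref{remar2.2} together with Theorems \ref{thm-1} and \ref{thm-2} by substituting the indicated values and letting the vacated branches and vacuous hypotheses collapse. Your branch-by-branch bookkeeping (e.g., $(f(e_2),+\infty]\cap Ran(f)=\emptyset$ when $e_2=1_L$, no negative values when $e_1=0_L$, and $\mathcal{F}_a=\emptyset$ with conditions (iv)--(v) automatic when $a=1_L$) is exactly the verification the paper leaves implicit.
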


\begin{remark}
\emph{
\begin{enumerate}[{\rm (i)}]
  \item
Since we require the functions $f$ in both Theorems \ref{thm-1} and \ref{thm-2} to be injective, it is obviously impossible to choose a suitable
 $f:L\to [-\infty,+\infty]$ when the cardinality of $L$ is strictly greater than $\aleph_1.$
  \item
The following are two alternative forms of \eqref{eq-1} and \eqref{eq-2}, respectively. For all $x,y\in L$,
$$
\mathbb{U}(x,y)=
\begin{cases}
f^{-1}(f(x)+f(y)), &\mbox{either } (f(x),f(y))\in [-\infty,0]^2 \mbox{ and } f(x)+f(y)\in Ran(f)\\
& \mbox{or }(f(x),f(y))\in [0,f(a)]^2,  f(x)+f(y)\leq f(a), f(x)+f(y)\in Ran(f) \\
& \mbox{ and } x,y\notin I_a;\\
0_L, &(f(x),f(y))\in [-\infty,0]^2 \mbox{ and }  f(x)+f(y)\in [-\infty,f(0_L));\\
a, &\mbox{either }(f(x),f(y))\in [0,f(a)]^2 \mbox{ and }  f(x)+f(y)\geq f(a),\\
& \mbox{or }(f(x),f(y))\in [f(a),+\infty]\times [0,f(a)] \cup [0,f(a)]\times [f(a),+\infty]\\
& \mbox{or }(f(x),f(y))\in [0,f(a)]\times \mathcal{F}_a\cup \mathcal{F}_a\times [0,f(a)];\\
x, &\mbox{either }(f(x),f(y))\in [-\infty,0]\times [0,+\infty],\\
& \mbox{or }(f(x),f(y))\in [f(e_2),+\infty]\times [f(a),f(e_2)],\\
& \mbox{or }(f(x),f(y))\in [f(a),f(e_2)]^2 \mbox{ and }  f(x)\leq f(y)\\
& \mbox{or }(f(x),f(y))\in [f(e_2),+\infty]^2 \mbox{ and }  f(x)\geq f(y);\\
y, & \mbox{either }(f(x),f(y))\in [0,+\infty]\times [-\infty,0],\\
 & \mbox{or }(f(x),f(y))\in [f(a),f(e_2)]\times [f(e_2),+\infty],\\
& \mbox{or }(f(x),f(y))\in [f(a),f(e_2)]^2 \mbox{ and }  f(x)\geq f(y)\\
& \mbox{or }(f(x),f(y))\in [f(e_2),+\infty]^2 \mbox{ and }  f(x)\leq f(y).
\end{cases}
$$
For all $x,y\in L$,
$$
\mathbb{U}_d(x,y)=
\begin{cases}
f^{-1}(f(x)+f(y)), &\mbox{either }(f(x),f(y))\in [0,+\infty]^2 \mbox{ and } f(x)+f(y)\in Ran(f)\\
& \mbox{or }(f(x),f(y))\in [f(a),0]^2 \mbox{ and }  f(x)+f(y)\geq f(a) \mbox{ and }  \\
&f(x)+f(y)\in Ran(f) \mbox{ and } x,y\notin I_a;\\
0_L, & (f(x),f(y))\in [0,+\infty]^2 \mbox{ and }  f(x)+f(y)\in [f(0_L),+\infty);\\
a, & \mbox{either }(f(x),f(y))\in [f(a),f(e_1)]\times [-\infty ,f(a)]\\
&~~~~~~~~~~~~~~~~~~~~~~~~~~~\cup [-\infty, f(a)]\times [f(a),f(e_1)],\\
& \mbox{or }(f(x),f(y))\in [0,f(a)]^2 \mbox{ and } x\notin I_a \mbox{ and } y\notin I_a\\
& \mbox{or }(f(x),f(y))\in [f(a),0]\times \mathcal{F}_a\cup \mathcal{F}_a\times [f(a),0];\\
x, &\mbox{either }(f(x),f(y))\in [0,+\infty]\times [-\infty,0],\\
& \mbox{or }(f(x),f(y))\in [f(e_2),f(a)]\times [-\infty,f(e_2)],\\
& \mbox{or }(f(x),f(y))\in [f(e_2),f(a)]^2 \mbox{ and } f(x)\geq f(y)\\
& \mbox{or }(f(x),f(y))\in [-\infty,f(e_2)]^2 \mbox{ and } f(x)\geq f(y);\\
y, &\mbox{either }(f(x),f(y))\in [-\infty,0]\times [0,+\infty],\\
& \mbox{or }(f(x),f(y))\in [-\infty,f(e_2)]\times [f(e_2),f(a)],\\
& \mbox{or }(f(x),f(y))\in [f(e_2),f(a)]^2 \mbox{ and } f(x)\leq f(y)\\
& \mbox{or }(f(x),f(y))\in [-\infty,f(e_2)]^2 \mbox{ and } f(x)\leq f(y).
\end{cases}
$$
\item
If $L=[0,1]$, then $f$ in Theorem \ref{thm-1} only needs to satisfy the condition (i) since $x\nparallel y$ for any $x,y\in [0,1]$, and the following function $\mathbb{U}(x,y)$ is a 2-uninorm on $[0,1]$. For all $x,y\in [0,1]$,
\begin{equation}
\mathbb{U}(x, y)=
\begin{cases}
f^{(-1)}(\min\{f(x)+f(y), f(a)\}), & (f(x),f(y))\in [-\infty, 0]^2\cup [0,f(a)]^2; \\
a, & (f(x),f(y))\in (f(a),+\infty]\times [0,f(a)]\\
&~~~~\cup [0,f(a)]\times (f(a),+\infty];\\
f^{(-1)}(\max\{f(x),f(y)\}, & (f(x),f(y))\in (f(e_2),+\infty]^2\cup (f(e_2),+\infty]\\
&~~~~\times (f(a),f(e_2)]\cup (f(a),f(e_2)]\times (f(e_2),+\infty];\\
f^{(-1)}(\min\{f(x),f(y)\}), & \mbox{otherwise}.
\end{cases}
\end{equation}
\end{enumerate}
}
\end{remark}

The following two examples illustrate Theorem \ref{thm-1}.
\begin{example}
\emph{
Consider the lattice $L_1=\{0_{L_1},x_1,x_2,x_3,x_4,e_1,x_5,x_6,a,x_7,x_8,x_9,e_2,x_{10},x_{11},1_{L_1}\}$ given in Fig. \ref{FIG:eg1} and the injective increasing function $f$ defined by Table \ref{TB:1}. One can check that
the function $f$ satisfies Theorem \ref{thm-1}, and the 2-uninorm $\mathbb{U}$ is shown by Table \ref{TB:3}.}
\end{example}
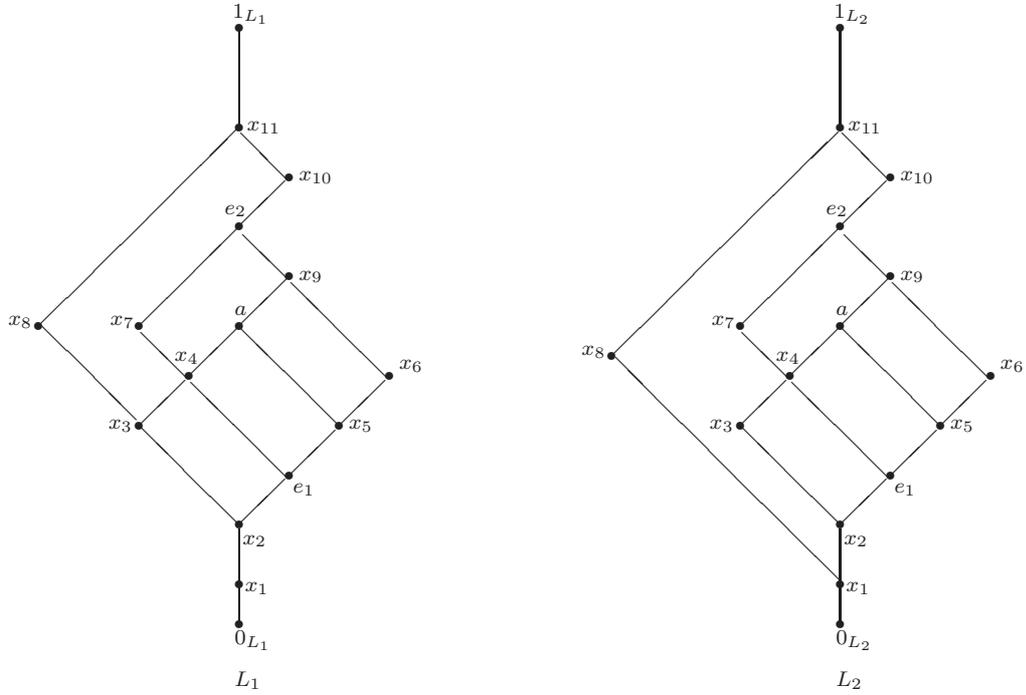
\begin{figure}[!h]
\par\noindent\vskip50pt
 \begin{minipage}{11pc}
\setlength{\unitlength}{0.75pt}\begin{picture}(600,290)
\put(200,60){\circle*{4}}\put(203,58){\makebox(0,0)[l]{\footnotesize $x_1$}}
\put(200,90){\circle*{4}}\put(202,82){\makebox(0,0)[l]{\footnotesize $x_2$}}
\put(225,115){\circle*{4}}\put(227,108){\makebox(0,0)[l]{\footnotesize $e_1$}}
\put(150,140){\circle*{4}}\put(135,140){\makebox(0,0)[l]{\footnotesize $x_3$}}
\put(150,190){\circle*{4}}\put(136,192){\makebox(0,0)[l]{\footnotesize $x_7$}}
\put(175,165){\circle*{4}}\put(168,174){\makebox(0,0)[l]{\footnotesize $x_4$}}
\put(250,140){\circle*{4}}\put(255,140){\makebox(0,0)[l]{\footnotesize $x_5$}}
\put(200,190){\circle*{4}}\put(198,198){\makebox(0,0)[l]{\footnotesize $a$}}
\put(200,240){\circle*{4}}\put(193,248){\makebox(0,0)[l]{\footnotesize $e_2$}}
\put(100,190){\circle*{4}}\put(85,192){\makebox(0,0)[l]{\footnotesize $x_8$}}
\put(275,165){\circle*{4}}\put(280,170){\makebox(0,0)[l]{\footnotesize $x_6$}}
\put(225,215){\circle*{4}}\put(230,215){\makebox(0,0)[l]{\footnotesize $x_9$}}
\put(225,265){\circle*{4}}\put(230,265){\makebox(0,0)[l]{\footnotesize $x_{10}$}}
\put(200,290){\circle*{4}}\put(204,290){\makebox(0,0)[l]{\footnotesize $x_{11}$}}
\put(200,340){\circle*{4}}\put(197,347){\makebox(0,0)[l]{\footnotesize $1_{L_1}$}}
\put(200,40){\circle*{4}}\put(198,32){\makebox(0,0)[l]{\footnotesize $0_{L_1}$}}
\put(198,12){\makebox(0,0)[l]{\footnotesize $L_1$}}
\put(500,60){\circle*{4}}\put(503,58){\makebox(0,0)[l]{\footnotesize $x_1$}}
\put(500,90){\circle*{4}}\put(502,82){\makebox(0,0)[l]{\footnotesize $x_2$}}
\put(525,115){\circle*{4}}\put(527,108){\makebox(0,0)[l]{\footnotesize $e_1$}}
\put(450,140){\circle*{4}}\put(435,140){\makebox(0,0)[l]{\footnotesize $x_3$}}
\put(450,190){\circle*{4}}\put(436,192){\makebox(0,0)[l]{\footnotesize $x_7$}}
\put(475,165){\circle*{4}}\put(468,174){\makebox(0,0)[l]{\footnotesize $x_4$}}
\put(550,140){\circle*{4}}\put(555,140){\makebox(0,0)[l]{\footnotesize $x_5$}}
\put(500,190){\circle*{4}}\put(498,198){\makebox(0,0)[l]{\footnotesize $a$}}
\put(500,240){\circle*{4}}\put(493,248){\makebox(0,0)[l]{\footnotesize $e_2$}}
\put(386,175){\circle*{4}}\put(371,177){\makebox(0,0)[l]{\footnotesize $x_8$}}
\put(575,165){\circle*{4}}\put(580,170){\makebox(0,0)[l]{\footnotesize $x_6$}}
\put(525,215){\circle*{4}}\put(530,215){\makebox(0,0)[l]{\footnotesize $x_9$}}
\put(525,265){\circle*{4}}\put(530,265){\makebox(0,0)[l]{\footnotesize $x_{10}$}}
\put(500,290){\circle*{4}}\put(504,290){\makebox(0,0)[l]{\footnotesize $x_{11}$}}
\put(500,340){\circle*{4}}\put(497,347){\makebox(0,0)[l]{\footnotesize $1_{L_2}$}}
\put(500,40){\circle*{4}}\put(498,32){\makebox(0,0)[l]{\footnotesize $0_{L_2}$}}
\put(498,12){\makebox(0,0)[l]{\footnotesize $L_2$}}

\put(201,91){\line(1,1){22}}
\put(226,116){\line(1,1){22}}
\put(199,91){\line(-1,1){48}}
\put(249,141){\line(-1,1){48}}
\put(224,114){\line(-1,1){48}}
\put(151,141){\line(1,1){22}}
\put(200,291){\line(0,1){48}}
\put(101,191){\line(1,1){98}}
\put(101,191){\line(1,-1){48}}
\put(251,141){\line(1,1){22}}
\put(274,164){\line(-1,1){48}}
\put(174,164){\line(-1,1){23}}
\put(176,166){\line(1,1){23}}
\put(201,191){\line(1,1){23}}
\put(151,191){\line(1,1){48}}
\put(224,214){\line(-1,1){22}}
\put(201,241){\line(1,1){22}}
\put(224,264){\line(-1,1){23}}
\put(200,42){\line(0,1){17}}
\put(200,62){\line(0,1){26}}

\put(501,91){\line(1,1){22}}
\put(526,116){\line(1,1){22}}
\put(499,91){\line(-1,1){48}}
\put(549,141){\line(-1,1){48}}
\put(524,114){\line(-1,1){48}}
\put(451,141){\line(1,1){22}}
\put(500,291){\line(0,1){48}}
\put(387,176){\line(1,1){112}}
\put(551,141){\line(1,1){22}}
\put(574,164){\line(-1,1){48}}
\put(474,164){\line(-1,1){23}}
\put(476,166){\line(1,1){23}}
\put(501,191){\line(1,1){23}}
\put(451,191){\line(1,1){48}}
\put(524,214){\line(-1,1){22}}
\put(501,241){\line(1,1){22}}
\put(524,264){\line(-1,1){23}}
\put(500,42){\line(0,1){17}}
\put(501,61){\line(-1,1){113}}
\put(500,62){\line(0,1){26}}
  \end{picture}
  \end{minipage}
\caption{. Two bounded lattices $L_1$ and $L_2$}
\label{FIG:eg1}
\end{figure}
\begin{table}[!h]
\centering
\setlength{\abovecaptionskip}{0pt}
\setlength{\belowcaptionskip}{0pt}
\caption{The generator $f$.}
 \begin{tabular}{c|c c c c c c c c c c c c c c c c}
  \hline
   $x$ & $0_{L_1}$ & $x_1$ & $x_2$ & $x_3$ & $x_4$ &$e_1$ &$x_5$ &$x_6$ &$a$ &$x_7$ &$x_8$ &$x_9$&$e_2$&$x_{10}$& $x_{11}$&$1_{L_1}$\\
   \hline
   $f$ & -11 & -8 & -4 & 6 & 12& $0$ &9 &14 &15 &13 &11&17&18&20 &22&24\\
   \hline
 \end{tabular}
\label{TB:1}
 \end{table}

 \begin{table}[!h]
\centering
\setlength{\abovecaptionskip}{0pt}
\setlength{\belowcaptionskip}{0pt}
\caption{The 2-uninorm $\mathbb{U}$.}
\begin{tabular}{c|c c c c c c c c c c c c c c c c}
	\hline
   $\mathbb{U}$ & $0_{L_1}$ &  $x_1$ &  $x_2$ &  $x_3$ &  $x_4$ & $e_1$ & $x_5$ & $x_6$ & $a$ &  $x_7$ & $x_8$ & $x_9$ & $e_2$ & $x_{10}$& $x_{11}$ & $1_{L_1}$\\
   \hline
            $0_{L_1}$ & $0_{L_1}$ &  $0_{L_1}$ &  $0_{L_1}$ & $0_{L_1}$ &  $0_{L_1}$&  $0_{L_1}$ & $0_{L_1}$ & $0_{L_1}$ & $0_{L_1}$& $0_{L_1}$ & $0_{L_1}$& $0_{L_1}$& $0_{L_1}$ &  $0_{L_1}$ &  $0_{L_1}$ & $0_{L_1}$\\

            $x_1$ & $0_{L_1}$ &  $0_L$ &  $0_L$ &  $x_1$ &  $x_1$&  $x_1$ & $x_1$ & $x_1$ & $x_1$ & $x_1$ & $x_1$& $x_1$ & $x_1$ &    $x_1$ & $x_1$ & $x_1$\\

            $x_2$ & $0_{L_1}$ &  $0_L$ &  $x_1$ &  $x_2$ &  $x_2$&  $x_2$ & $x_2$ & $x_2$ & $x_2$ & $x_2$ & $x_2$& $x_2$ & $x_2$ &    $x_2$ & $x_2$ & $x_2$\\

            $x_3$ & $0_{L_1}$ &  $x_1$ &  $x_2$ &  $x_4$ &    $a$&  $x_3$ &   $a$ & $a$ &   $a$ & $a$ & $a$&   $a$ &  $a$ &       $a$ &   $a$ & $a$\\

            $x_4$ & $0_{L_1}$ &  $x_1$ &  $x_2$ &  $a$ &       $a$&  $x_4$ &   $a$ & $a$ &  $a$ & $a$ & $a$&    $a$ & $a$ &        $a$ &   $a$ & $a$\\

            $e_1$ &$0_{L_1}$ &  $x_1$ &  $x_2$ &  $x_3$ &     $x_4$&  $e_1$ & $x_5$ & $a$ & $a$ & $a$ & $a$& $a$ & $a$ & $a$ & $a$ & $a$\\

            $x_5$ & $0_{L_1}$ &  $x_1$ &  $x_2$ &  $a$ &  $a$&  $x_5$ & $a$ & $a$ & $a$ & $a$ & $a$& $a$ & $a$ & $a$ & $a$ &$a$\\

            $x_6$ & $0_{L_1}$ &  $x_1$ &  $x_2$ &  $a$ &  $a$&  $a$ & $a$ & $a$ & $a$ & $a$ & $a$& $a$ & $a$ & $a$ & $a$ & $a$\\

              $a$ & $0_{L_1}$ &  $x_1$ &  $x_2$ &  $a$ &  $a$&  $a$ & $a$ & $a$ & $a$ & $a$ & $a$& $a$ & $a$ & $a$ & $a$ & $a$\\

   $x_7$ & $0_{L_1}$ &  $x_1$ &  $x_2$ &   $a$ &  $a$&  $a$ & $a$ & $a$ & $a$ & $a$ & $a$& $a$ & $a$ & $a$ & $a$ & $a$\\

   $x_8$ & $0_{L_1}$ &  $x_1$ &  $x_2$ &  $a$ &  $a$&  $a$ & $a$ & $a$ & $a$ & $a$ & $a$& $a$ & $a$ & $a$ & $a$ & $a$\\

   $x_9$ & $0_{L_1}$&  $x_1$ &  $x_2$ &  $a$  &  $a$&  $a$ & $a$ & $a$ & $a$ & $a$& $a$ & $x_9$ & $x_9$ & $x_{10}$ & $x_{11}$ & $1_{L_1}$\\

   $e_2$ & $0_{L_1}$ &  $x_1$ &  $x_2$ &  $a$  &  $a$&  $a$ & $a$ & $a$ & $a$ & $a$& $a$ & $x_9$ & $e_2$ & $x_{10}$ & $x_{11}$ & $1_{L_1}$\\

   $x_{10}$ &$0_{L_1}$&  $x_1$ &  $x_2$ &  $a$  &  $a$&  $a$ & $a$ & $a$ & $a$ & $a$& $a$ & $x_{10}$ & $x_{10}$ & $x_{10}$ & $x_{11}$ & $1_{L_1}$\\

   $x_{11}$ & $0_{L_1}$&  $x_1$ &  $x_2$ &  $a$  &  $a$&  $a$ & $a$ & $a$ & $a$ & $a$& $a$ & $x_{11}$ & $x_{11}$ & $x_{11}$ & $x_{11}$ & $1_{L_1}$\\

   $1_L$ & $0_{L_1}$ &  $x_1$ &  $x_2$ &  $a$  &  $a$&  $a$ & $a$ & $a$ & $a$ & $a$& $a$ & $1_{L_1}$ & $1_{L_1}$ &$1_{L_1}$ &$1_{L_1}$& $1_{L_1}$\\
   \hline
 \end{tabular}
 \label{TB:3}
 \end{table}
 \quad\\
\quad\\

\quad\\
\quad\\
\begin{example}
\emph{Consider the bounded lattice $(L,\leq)$ as shown in Fig. \ref{FIG:eg2}.
Let $e_1=x_4$, $e_2=x_9$ and $a=x_7$. Then the function
$f:L\to [-\infty,+\infty]$ defined by $f(x_i)=i-4$ is an injective increasing function with $f(x_4)=0$. Further, we have:
\begin{enumerate}[{\rm (i)}]
\item
If $i\in [0,3]$, then $f(x_i)\in [-4,-1]$ and we have $f(x_i)+f(x_j)\in [-\infty, f(x_0))\cup [-4,-2]$ for all $i,j\in [0,3]\cup \{4\}$.
If $i\in [5,7]$, then $f(x_i)\in [1,3]$ and we have $\min\{f(x_i)+f(x_j),f(x_7)\}\in [2,3]$ for all $i,j\in [5,7]\cup \{4\}$.
Hence the condition (i) in Theorem \ref{thm-1} is satisfied since $[-4,-2]\cup [2,3]\subseteq Ran(f)$.
\item
For all $(f(x_i),f(x_j))\in [0,f(x_7)]^2\cup [-\infty,0]^2$, if $f(x_i)$ and $f(x_j)$ have at least one same summand $f(x_k)\in Ran(f)$ then we have $i,j\in [0,2]$, or $i,j\in [6,7]$, or $i\in [0,2]$ and $j\in [2,3]$, or $i\in [5,5.5]$ and $j\in [6,7]$, subsequently, $x_i\nparallel x_j$ by Fig. \ref{FIG:eg2}. Therefore, the condition (ii) in Theorem \ref{thm-1} is satisfied.
\item
For all $f(x_i)<0$ and $0< f(x_j)\leq f(x_7)$, we have $i\in [0,3]$ and $j\in [5,7]$. Then $x_i\nparallel x_j$ by Fig. \ref{FIG:eg2} and the condition (iii) in Theorem \ref{thm-1} is satisfied.
\item
For all $f(a)\leq f(x_i)<f(x_j)$, we have $i,j\in [7,10]$. Then $x_i\nparallel x_j$ by Fig. \ref{FIG:eg2}. For all $f(x_i)<0< f(a)\leq f(x_j)$, we have $i\in [0,3]$ and $j\in [7,10]$. Thus $x_i\nparallel x_j$ by Fig. \ref{FIG:eg2}. Therefore, the condition (iv) in Theorem \ref{thm-1} is satisfied.
\item
For all $f(x_i),f(x_j)\in (0,f(x_7))$, if $f(x_k)=f(x_i)+f(x_j)\in Ran(f)$ and $0<f(x_i)+f(x_j)\leq f(x_7)$, then $f(x_k)=f(x_i)+f(x_j)\in [2,3]$. Subsequently, $k\in [6,7]$. Then the condition (v) in Theorem \ref{thm-1} is easily checked since $x_k\nparallel x_7$ by Fig. \ref{FIG:eg2}, i.e., $f^{-1}(f(x_i)+f(x_j))\nparallel x_8$.
\end{enumerate}
So that by Theorem \ref{thm-1}, the function $f$ is an additive generator of the 2-uninorm $\mathbb{U}$ given by: for all $x_i,x_j\in L$,
\begin{equation*}
\mathbb{U}(x_i,x_j)=
\begin{cases}
x_{\max\{i+j-4,0\}},& (x_i,x_j)\in ([x_0,x_4]\cup I_{x_4})^2,\\
x_{\min\{i+j-4,7\}}, & (x_i,x_j)\in [x_4,x_7]^2,\\
\min\{x_i,x_j\}, & (x_i,x_j)\in[x_7,x_9]^2\cup [x_0,x_4)\times [x_4,x_{10}]\cup [x_4,x_{10}]\times [x_0,x_4),\\
\max\{x_i,x_j\}, & (x_i,x_j)\in[x_9,x_{10}]^2\cup (x_9,x_{10}]\times (x_7,x_9]\cup (x_7,x_9]\times (x_9,x_{10}],\\
x_7, & \mbox{otherwise.}
\end{cases}
\end{equation*}}

\begin{figure}[!h]
\par\noindent\vskip50pt
 \begin{minipage}{11pc}
\setlength{\unitlength}{0.77pt}\begin{picture}(600,340)
\put(320,405){\circle*{4}}\put(317,412){\makebox(0,0)[l]{\footnotesize $x_{10}$}}
\put(320,395){\circle*{4}}\put(294,394){\makebox(0,0)[l]{\footnotesize $x_{9.9}$}}
\put(320,370){\circle*{3}}\put(317,377){\makebox(0,0)[l]{\footnotesize }}
\put(320,374){\circle*{3}}\put(317,381){\makebox(0,0)[l]{\footnotesize }}
\put(320,378){\circle*{3}}\put(317,385){\makebox(0,0)[l]{\footnotesize }}

\put(320,358){\circle*{3}}\put(317,365){\makebox(0,0)[l]{\footnotesize }}
\put(320,354){\circle*{3}}\put(317,361){\makebox(0,0)[l]{\footnotesize }}
\put(320,350){\circle*{3}}\put(317,357){\makebox(0,0)[l]{\footnotesize }}
\put(284,328){\circle*{4}}\put(259,328){\makebox(0,0)[l]{\footnotesize $x_{5.8}$}}
\put(299,342){\circle*{3}}\put(272,341){\makebox(0,0)[l]{\footnotesize }}
\put(302,345){\circle*{3}}\put(275,344){\makebox(0,0)[l]{\footnotesize }}
\put(305,348){\circle*{3}}\put(278,347){\makebox(0,0)[l]{\footnotesize }}

\put(398,317){\circle*{4}}\put(403,317){\makebox(0,0)[l]{\footnotesize $x_{5.6}$}}
\put(320,345){\circle*{4}}\put(324,347){\makebox(0,0)[l]{\footnotesize $x_7$}}
\put(320,327){\circle*{4}}\put(324,329){\makebox(0,0)[l]{\footnotesize $x_6$}}
\put(320,332){\circle*{3}}\put(324,334){\makebox(0,0)[l]{\footnotesize }}
\put(320,336){\circle*{3}}\put(324,338){\makebox(0,0)[l]{\footnotesize }}
\put(320,340){\circle*{3}}\put(324,342){\makebox(0,0)[l]{\footnotesize }}
\put(320,364){\circle*{4}}\put(324,366){\makebox(0,0)[l]{\footnotesize $x_9$}}
\put(314,358){\circle*{4}}\put(290,360){\makebox(0,0)[l]{\footnotesize $x_{5.9}$}}
%\put(315,323){\circle*{4}}\put(286,323){\makebox(0,0)[l]{\footnotesize $x_{5.79}$}}

\put(299,291){\circle*{3}}\put(269,291){\makebox(0,0)[l]{\footnotesize }}
\put(302,294){\circle*{3}}\put(272,294){\makebox(0,0)[l]{\footnotesize }}
\put(305,297){\circle*{3}}\put(275,297){\makebox(0,0)[l]{\footnotesize }}
\put(284,276){\circle*{4}}\put(260,276){\makebox(0,0)[l]{\footnotesize $x_{5.7}$}}
\put(287,273){\circle*{3}}\put(263,273){\makebox(0,0)[l]{\footnotesize}}
\put(290,270){\circle*{3}}\put(266,270){\makebox(0,0)[l]{\footnotesize}}
\put(293,267){\circle*{3}}\put(269,267){\makebox(0,0)[l]{\footnotesize}}

\put(320,55){\circle*{3}}\put(323,53){\makebox(0,0)[l]{\footnotesize }}
\put(320,50){\circle*{3}}\put(323,48){\makebox(0,0)[l]{\footnotesize }}
\put(320,45){\circle*{3}}\put(323,43){\makebox(0,0)[l]{\footnotesize }}
\put(320,90){\circle*{4}}\put(322,82){\makebox(0,0)[l]{\footnotesize $x_{2}$}}
\put(320,120){\circle*{4}}\put(324,122){\makebox(0,0)[l]{\footnotesize $x_{2.1}$}}
\put(320,160){\circle*{4}}\put(323,162){\makebox(0,0)[l]{\footnotesize $x_{2.2}$}}
\put(331,101){\circle*{4}}\put(334,98){\makebox(0,0)[l]{\footnotesize $x_{2.5}$}}
\put(309,101){\circle*{4}}\put(287,98){\makebox(0,0)[l]{\footnotesize $x_{2.3}$}}
\put(320,135){\circle*{3}}\put(318,127){\makebox(0,0)[l]{\footnotesize }}
\put(320,139){\circle*{3}}\put(318,131){\makebox(0,0)[l]{\footnotesize }}
\put(320,143){\circle*{3}}\put(318,134){\makebox(0,0)[l]{\footnotesize }}

\put(340,110){\circle*{3}}\put(342,103){\makebox(0,0)[l]{\footnotesize }}
\put(343,113){\circle*{3}}\put(345,106){\makebox(0,0)[l]{\footnotesize }}
\put(346,116){\circle*{3}}\put(348,109){\makebox(0,0)[l]{\footnotesize }}
\put(270,140){\circle*{4}}\put(247,140){\makebox(0,0)[l]{\footnotesize $x_{2.4}$}}
\put(297,113){\circle*{3}}\put(297,106){\makebox(0,0)[l]{\footnotesize }}
\put(293,116){\circle*{3}}\put(293,109){\makebox(0,0)[l]{\footnotesize }}
\put(290,119){\circle*{3}}\put(290,112){\makebox(0,0)[l]{\footnotesize }}
\put(370,140){\circle*{4}}\put(375,140){\makebox(0,0)[l]{\footnotesize $x_{2.71}$}}
\put(350,161){\circle*{3}}\put(343,170){\makebox(0,0)[l]{\footnotesize }}
\put(346,165){\circle*{3}}\put(339,174){\makebox(0,0)[l]{\footnotesize }}
\put(341,169){\circle*{3}}\put(334,178){\makebox(0,0)[l]{\footnotesize }}
\put(320,190){\circle*{4}}\put(322,195){\makebox(0,0)[l]{\footnotesize $x_3$}}
\put(330,180){\circle*{4}}\put(334,182){\makebox(0,0)[l]{\footnotesize $x_{2.79}$}}
\put(310,180){\circle*{4}}\put(283,182){\makebox(0,0)[l]{\footnotesize $x_{2.49}$}}
\put(320,212){\circle*{4}}\put(323,215){\makebox(0,0)[l]{\footnotesize $x_4$}}
\put(291,161){\circle*{3}}\put(284,170){\makebox(0,0)[l]{\footnotesize }}
\put(295,165){\circle*{3}}\put(288,174){\makebox(0,0)[l]{\footnotesize }}
\put(299,169){\circle*{3}}\put(292,178){\makebox(0,0)[l]{\footnotesize }}
\put(320,240){\circle*{4}}\put(325,240){\makebox(0,0)[l]{\footnotesize $x_{5}$}}
\put(320,265){\circle*{4}}\put(323,268){\makebox(0,0)[l]{\footnotesize $x_{5.1}$}}
\put(320,300){\circle*{4}}\put(324,300){\makebox(0,0)[l]{\footnotesize $x_{5.49}$}}
\put(320,313){\circle*{4}}\put(324,313){\makebox(0,0)[l]{\footnotesize $x_{5.79}$}}
\put(335,255){\circle*{4}}\put(340,251){\makebox(0,0)[l]{\footnotesize $x_{5.51}$}}
\put(305,255){\circle*{4}}\put(279,250){\makebox(0,0)[l]{\footnotesize $x_{5.61}$}}
\put(358,278){\circle*{3}}\put(363,258){\makebox(0,0)[l]{\footnotesize }}
\put(362,282){\circle*{3}}\put(367,262){\makebox(0,0)[l]{\footnotesize }}
\put(366,286){\circle*{3}}\put(371,266){\makebox(0,0)[l]{\footnotesize }}

\put(300,220){\circle*{4}}\put(272,220){\makebox(0,0)[l]{\footnotesize $x_{2.99}$}}
\put(340,220){\circle*{4}}\put(344,220){\makebox(0,0)[l]{\footnotesize $x_{2.9}$}}
\put(320,280){\circle*{3}}\put(325,280){\makebox(0,0)[l]{\footnotesize }}
\put(320,284){\circle*{3}}\put(325,284){\makebox(0,0)[l]{\footnotesize }}
\put(320,288){\circle*{3}}\put(325,288){\makebox(0,0)[l]{\footnotesize }}

\put(278,200){\circle*{3}}\put(264,200){\makebox(0,0)[l]{\footnotesize }}
\put(282,204){\circle*{3}}\put(268,204){\makebox(0,0)[l]{\footnotesize }}
\put(286,208){\circle*{3}}\put(272,208){\makebox(0,0)[l]{\footnotesize }}
\put(245,165){\circle*{4}}\put(219,170){\makebox(0,0)[l]{\footnotesize $x_{2.91}$}}
\put(395,165){\circle*{4}}\put(399,169){\makebox(0,0)[l]{\footnotesize $x_{2.8}$}}
\put(363,198){\circle*{3}}\put(368,198){\makebox(0,0)[l]{\footnotesize }}
\put(360,201){\circle*{3}}\put(365,201){\makebox(0,0)[l]{\footnotesize }}
\put(357,204){\circle*{3}}\put(362,204){\makebox(0,0)[l]{\footnotesize }}
\put(320,30){\circle*{4}}\put(325,28){\makebox(0,0)[l]{\footnotesize $x_{0.1}$}}
\put(320,20){\circle*{4}}\put(318,12){\makebox(0,0)[l]{\footnotesize $x_0$}}

\put(320,240){\line(-1,1){24}}
%\put(284,276){\line(1,-1){13}}
\put(320,364){\line(-1,-1){13}}
\put(284,328){\line(1,1){13}}
\put(284,276){\line(0,1){50}}
\put(320,313){\line(-1,-1){13}}
\put(284,276){\line(1,1){13}}
\put(398,317){\line(-1,1){78}}
\put(398,317){\line(-1,-1){30}}
\put(320,405){\line(0,-1){22}}
%\put(275,285){\line(0,1){33}}
%\put(275,285){\line(1,1){15}}
%\put(275,285){\line(1,-1){15}}
%\put(320,328){\line(-1,-1){14}}
\put(320,328){\line(0,-1){34}}
\put(320,240){\line(1,1){35}}
\put(320,240){\line(0,1){34}}
\put(320,240){\line(-1,-1){30}}
\put(320,240){\line(1,-1){33}}
\put(320,240){\line(0,-1){50}}
\put(245,165){\line(1,1){30}}
\put(395,165){\line(-1,1){30}}
\put(370,140){\line(1,1){25}}
\put(270,140){\line(-1,1){25}}
\put(320,190){\line(1,-1){17}}
\put(320,190){\line(-1,-1){17}}
\put(270,140){\line(1,1){17}}
\put(270,140){\line(1,-1){17}}
\put(320,190){\line(0,-1){40}}
\put(370,140){\line(-1,1){17}}
\put(370,140){\line(-1,-1){17}}
\put(320,90){\line(0,1){40}}
\put(320,90){\line(1,1){15}}
\put(320,90){\line(-1,1){17}}
\put(320,92){\line(0,-1){30}}
\put(320,22){\line(0,1){17}}

  \end{picture}
  \end{minipage}
\caption{. A bounded lattice $L$}
\label{FIG:eg2}
\end{figure}
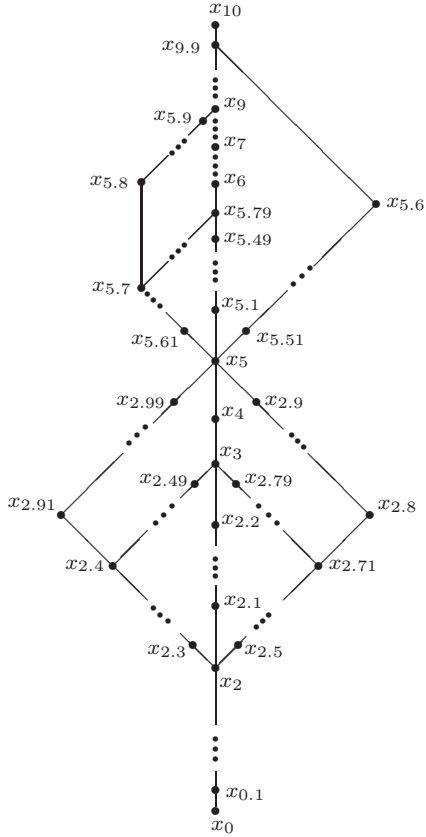
\end{example}

The following example shows that each of the conditions (i) - (v) in Theorems \ref{thm-1} can't be dropped, respectively.

\begin{example}
\emph{
Consider the two bounded lattices $L_1$ and $L_2$ in Fig. \ref{FIG:eg1} and the five injective increasing functions $f_i$, $i= 1,2,3,4,5$, with $f_i(e_1)=0$ defined by Tables \ref{TB:5}, \ref{TB:6}, \ref{TB:7}, \ref{TB:8} and \ref{TB:9}, respectively.
\begin{table}[!h]
\centering
\setlength{\abovecaptionskip}{0pt}
\setlength{\belowcaptionskip}{0pt}
\caption{The generator $f_1$.}
\label{TB:5}
 \begin{tabular}{c|c c c c c c c c c c c c c c c c}
  \hline
   $x$ & $0_{L_1}$ & $x_1$ & $x_2$ & $x_3$ & $x_4$ &$e_1$ &$x_5$ &$x_6$ &$a$ &$x_7$ &$x_8$ &$x_9$&$e_2$&$x_{10}$& $x_{11}$&$1_{L_1}$\\
   \hline
   $f_{1}$ & -11 & -8 & -4 & 6 & 10& $0$ &9 &14 &15 &13 &11&17&18&20 &22&24\\
   \hline
 \end{tabular}
\end{table}
\begin{table}[!h]
\centering
\setlength{\abovecaptionskip}{0pt}
\setlength{\belowcaptionskip}{0pt}
 \caption{The generator $f_2$.}
 \label{TB:6}
 \begin{tabular}{c|c c c c c c c c c c c c c c c c}
  \hline
   $x$ & $0_{L_1}$ & $x_1$ & $x_2$ & $x_3$ & $x_4$ &$e_1$ &$x_5$ &$x_6$ &$a$ &$x_7$ &$x_8$ &$x_9$&$e_2$&$x_{10}$& $x_{11}$&$1_{L_1}$\\
   \hline
   $f_{2}$ & -11 & -8 & -4 & 6 & 9& $0$ &12 &14 &15 &13 &11&17&18&20 &22&24\\
   \hline
 \end{tabular}
 \end{table}
 \begin{table}[!h]
\centering
\setlength{\abovecaptionskip}{0pt}
\setlength{\belowcaptionskip}{0pt}
\caption{The generator $f_3$.}
 \begin{tabular}{c|c c c c c c c c c c c c c c c c}
  \hline
   $x$ & $0_{L_2}$ & $x_1$ & $x_2$ & $x_3$ & $x_4$ &$e_1$ &$x_5$ &$x_6$ &$a$ &$x_7$ &$x_8$ &$x_9$&$e_2$&$x_{10}$& $x_{11}$&$1_{L_2}$\\
   \hline
   $f_{3}$ & -11 & -8 & -4 & 6 & 12& $0$ &9 &14 &15 &13 &11&17&18&20 &22&24\\
   \hline
 \end{tabular}
  \label{TB:7}
 \end{table}
  \begin{table}[!h]
\centering
\setlength{\abovecaptionskip}{0pt}
\setlength{\belowcaptionskip}{0pt}
\caption{The generator $f_4$.}
 \begin{tabular}{c|c c c c c c c c c c c c c c c c}
  \hline
   $x$ & $0_{L_1}$ & $x_1$ & $x_2$ & $x_3$ & $x_4$ &$e_1$ &$x_5$ &$x_6$ &$a$ &$x_7$ &$x_8$ &$x_9$&$e_2$&$x_{10}$& $x_{11}$&$1_{L_1}$\\
   \hline
   $f_{4}$ & -11 & -8 & -4 & 6 & 12& $0$ &9 &14 &15 &13 &19&17&18&20 &22&24\\
   \hline
 \end{tabular}
  \label{TB:8}
 \end{table}
 \begin{table}[!h]
\centering
\setlength{\abovecaptionskip}{0pt}
\setlength{\belowcaptionskip}{0pt}
\caption{The generator $f_5$.}
 \begin{tabular}{c|c c c c c c c c c c c c c c c c}
  \hline
   $x$ & $0_{L_1}$ & $x_1$ & $x_2$ & $x_3$ & $x_4$ &$e_1$ &$x_5$ &$x_6$ &$a$ &$x_7$ &$x_8$ &$x_9$&$e_2$&$x_{10}$& $x_{11}$&$1_{L_1}$\\
   \hline
   $f_{5}$ & -11 & -8 & -4 & 9 & 13& $0$ &6 &12 &15 &14 &11&17&18&20 &22&24\\
   \hline
 \end{tabular}
  \label{TB:9}
 \end{table}
\begin{enumerate}[{\rm (i)}]
  \item
One can check that $f_1: L_1\to [-11, 24]$ satisfies (ii), (iii), (iv) and (v), but it doesn't satisfy (i) in Theorem \ref{thm-1} since $f_1(x_3)+f_1(x_3)=12\notin Ran(f_1)\cup [-\infty,f_1(0_L)]$. Applying Formula \eqref{eq-1}, we know that $\mathbb{U}(\mathbb{U}(x_3,x_3),x_4)=\mathbb{U}(e_1,x_4)=x_4\neq a=\mathbb{U}(x_3,a)=\mathbb{U}(x_3,\mathbb{U}(x_3,x_4))$.
Thus $\mathbb{U}$ isn't a 2-uninorm.
  \item
Also, one can easily verify that $f_2: L_1\to [-11, 24]$ satisfies (i), (iii), (iv) and (v), but it doesn't satisfy (ii) of Theorem \ref{thm-1} since both $f_2(x_5)$ and $f_2(x_3)$ have a same summand $f_2(x_3)$ but $x_3\parallel x_5$. By using Formula \eqref{eq-1}, we have
$\mathbb{U}(x_3,e_1)=x_3\parallel x_5=\mathbb{U}(x_3,x_3)$ while $e_1\leq x_3$. It follows that $\mathbb{U}$ isn't a 2-uninorm.
  \item
One can clarify that $f_3: L_2\to [-11,24]$ satisfies (i),(ii),(iv) and (v), but $f_3$ doesn't satisfy (iii) in Theorem \ref{thm-1} since $f_3(x_2)<0$, $x_2<e_1$ but $x_2 \parallel x_8$. By using Formula \eqref{eq-1}, we obtain $\mathbb{U}(x_2,x_8)=x_2$ and
$\mathbb{U}(e_1,x_8)=x_8$. Thus $\mathbb{U}(x_2,x_8) \parallel \mathbb{U}(e_1,x_8)$ while $x_2< e_1$. It follows that $\mathbb{U}$ isn't a
2-uninorm.
  \item
One can prove that $f_4: L_1\to [-11,24]$ satisfies (i), (ii), (iii) and (v), but $f_3$ doesn't satisfy (iv) in Theorem \ref{thm-1} since $f_4(a)<f_4(x_{9})<f_4(x_{10})$, but $x_{8} \parallel x_{10}$. By using Formula \eqref{eq-1}, one has $\mathbb{U}(x_{9},x_{8})=x_{8}$ and $\mathbb{U}(x_{10},x_{8})=x_{10}$. Thus $\mathbb{U}(x_{9},x_{8})\parallel \mathbb{U}(x_{10},x_{8})$ while $x_{9}<x_{10}$. It follows that $\mathbb{U}$ isn't a 2-uninorm.
  \item
One can check that $f_5: L_1\to [-11,24]$ satisfies (i), (ii), (iii) and (iv), but $f_5$ doesn't satisfy (v) in Theorem \ref{thm-1} since $0<f_5(x_5)<f_5(a)$, but $x_{6} \nparallel a$. By using Formula \eqref{eq-1}, one knows $\mathbb{U}(x_5,x_5)=x_{6}$ and $\mathbb{U}(x_5,x_6)=a$. Thus $\mathbb{U}(x_5,x_5)=f_5^{-1}(f_5(x_5)+f_5(x_5))=x_6 \parallel a=\mathbb{U}(x_{5},x_{6})$ while $x_5<x_6$. It follows that $\mathbb{U}$ isn't a
2-uninorm.
\end{enumerate}
}
\end{example}

\section{Conclusions}\label{sec6}
In this article, we mainly presented two construction methods of 2-uninorms on bounded lattices by using additive generators, and also supplied two examples to show that the existence of such additive generators on bounded lattices. It is
worth pointing out that uninorms, nullnorms, uni-nullnorms, null-uninorms are all special 2-uninorms, respectively. Consequently,
we can obtain their additive generators. Dually, one may discuss the multiplicative generators of these binary operations. It is an interesting problem to further find some lax conditions in both Theorems \ref{thm-1} and \ref{thm-2} by modifying Formulas \eqref{eq-1} and \eqref{eq-2}, respectively.

\section*{References}

\end{document}